\newcommand{\chapterauthor}[1]{%
  {\parindent0pt\vspace*{-25pt}%
  \linespread{1.1}\large\scshape#1%
  \par\nobreak\vspace*{35pt}}
  \@afterheading%
}
\numberwithin{equation}{section}
\newtheorem{theorem}{Theorem}[section]
\newtheorem{lemma}[theorem]{Lemma}
\theoremstyle{definition}
\newtheorem{remark}[theorem]{Remark}
\newcites{intro,urn,bp,ce,con}%
{References,
References,
References,
References,
References}
\newcommand{\R}{\right}
\newcommand{\F}{\left}
\newcommand{\E}{\mathbb{E}}
\newcommand{\bs}[1]{\boldsymbol{#1}}
\newcommand{\e}{\mathrm{e}}
\title{More on the asymptotic behaviour of moments of branching Markov processes}
\author{Christopher B. C. Dean\footnote{Department of Statistics, University of Warwick, Coventry, CV4 7AL, UK. Email: \texttt{\{Christopher.B.C.Dean\}, \{Emma.Horton\}@warwick.ac.uk}} \and Emma Horton$^*$}
\date{}
\begin{document}
\maketitle

\begin{abstract}
Consider a branching Markov process, $X = (X_t, t \ge 0)$, with non-local branching mechanism. Studying the asymptotic behaviour of the moments of $X$ has recently received attention in the literature \cite{genealogies_felix, bmoments} due to the importance of these results in understanding the underlying genealogical structure of $X$. In this article, we generalise the results of \cite{bmoments} to allow for a non-simple leading eigenvalue and to also study the higher order fluctuations of the moments of $X$. These results will be useful for proving central limit theorems and extending well-known LLN results. 

\medskip

\noindent{\it Keywords} : Branching process, fluctuations, moments, non-local branching.

\medskip

\noindent{\it  MSC:} 60J80, 60F05.
\end{abstract}

\section{Introduction}
Let $E$ be a Polish space and $\dagger \notin E$ a cemetery (absorbing) state. We consider a measure-valued stochastic process $X =(X_t, t\geq0)$ given by
\[
X_t := \sum_{i = 1}^{N_t}\delta_{x_i(t)}, \qquad t \ge 0, 
\]
whose atoms $\{x_i(t) : i = 1, \dots, N_t\}$ evolve in $E\cup \{\dagger\}$ according to the following dynamics. From an initial position $x \in E$, particles evolve independently in $E$ according to a Markov process $(\xi, \mathbf P_x)$. When at $y \in E$, particles branch at rate $\gamma(y)$, which we assume to be bounded, at which point the particle is replaced by a random number, $N$, of offspring at positions $x_1, \dots, x_N$ in $E$. The law of the offspring locations and their number is denoted by $\mathcal P_y$, where $y$ denotes the position of the parent particle at the branch time. We will also often use the notation $\mathcal E_y$ for the associated expectation operator. We refer to $X$ as a $(\mathbf P, \gamma, \mathcal P)$-branching Markov process (BMP). We will let $\mathbb P_{\mu}$ denote its law when initiated from $\mu \in \mathcal M(E)$, the space of finite measures on $E$. We also let $B(E)$ denote the space of bounded complex-valued functions, $B^+(E)$ denote the space of non-negative real bounded functions and $B_1(E)$, (resp. $B_1^+(E)$) denote those functions in $B(E)$, (resp. $B^+(E)$) that are bounded by unity. 

One way to characterise the behaviour of $X$ is via its linear expectation semigroup,
\begin{equation}\label{eq:linear}
  \psi_t[f](x) := \E_{\delta_x}\left[X_t[f]\right], \quad t \ge 0,\, x \in E, \, f \in B(E),
\end{equation}
where \(\E_{\delta_x}\) denotes the expectation conditional on \(X_0\) consisting of  a single particle at \(x\), and, for a measure $\mu$ and a function $f$, we have used the notation
\[
  \mu[f] := \int_E f(x)\mu({\rm d}x).
\]
In many cases, it is well known that the semigroup $\psi$ satisfies the following Perron Frobenius-type decomposition.
\begin{enumerate}[label=(PF),ref=(PF)]
\item \label{PF}
There exists $\lambda_1 \in \mathbb R$, $\varphi_1 \in B^+(E)$ and a probability measure $\tilde\varphi_1$ on $E$ such that 
\[
  \psi_t[\varphi_1] = {\rm e}^{\lambda_1 t}\varphi_1, \qquad \tilde\varphi_1[\psi_t[f]] = {\rm e}^{\lambda_1 t}\tilde\varphi_1[f],
\]
and
\[
\sup_{x \in E, f \in B_1(E)}\left| {\rm e}^{-\lambda_1 t}\psi_t[f](x) - \tilde\varphi_1[f]\varphi_1(x)\right| \to 0, \qquad \text{as } t \to \infty.
\]
\end{enumerate}
When this decomposition holds, along with moment conditions on the offspring distribution, it has been shown in \cite{bmoments} that similar asymptotics hold for the moments $\mathbb E_{\delta_x}[X_t[f]^k]$. We also mention the previous works \cite{durhamCMJ, meatman, iscoe, Klenke}, where asymptotics for moments of branching processes were studied under varying assumptions. Results of this kind are particularly useful in proving laws of large numbers \cite{AH1, EHK, bNTEbook}, for setting benchmarks for Monte Carlo codes \cite{NTEmoments} and for understanding the genealogical structure of branching Markov processes \cite{genealogies_felix, M2few, pedro}. However, these results do not give a complete description of the asymptotic behaviour of $X$. Indeed, they do not allow us to say anything about the fluctuations of the process via a CLT, for example. Moreover, there are many cases where \ref{PF} is not satisfied, for example when the first eigenvalue is not simple. The aim of this article is to study the asymptotic moments of $X$ in more depth by considering what happens when we have a more general version of \ref{PF}. These results will play a key role in proving CLTs for $X$ and will also allow us to provide new proofs for a LLN result.

\subsection{Assumptions}
{ Before stating our main results, we first spend some time discussing the assumptions. Our main assumption generalises \ref{PF} to the case when $\lambda_1$ is not a simple eigenvalue and where we may have more points in the discrete spectrum of $\psi$. Here and throughout, we will use the notation ${\rm ker}(\lambda_\ell) := \{f \in B(E):\tilde\varphi_{i,j}^{(k)}[f]=0, 1\leq i \leq \ell,1\leq j \leq p_i, 1 \leq k \leq k_{i,j}\}$ for each $1 \le \ell \le m$, where the \(\tilde\varphi_{i,j}^{(k)}\) are defined in the following assumption.

{\begin{enumerate}[label=(H1),ref=(H1)]
\item \label{H1b} There exists an integer \(m\geq 1\), a real number \(\lambda_1>0\), \(\lambda_{2},\dots ,\lambda_{m}\in \mathbb{C}\), that satisfy\footnote{If $m = 1$, the list $\lambda_2 \dots, \lambda_m$ is an empty list.} \(\lambda_1>\mathrm{Re}\lambda_2\geq \dots \geq \mathrm{Re}\lambda_m\), such that, for \(1\leq i \leq m\), there exist bounded functions $$\varphi_{i,1}^{(1)},\dots,\varphi_{i,1}^{(k_{i,1})},\dots,\varphi_{i,p_i}^{(1)},\dots,\varphi_{i,p_i}^{(k_{i,p_i})} \in B(E),$$ bounded linear functionals $$\tilde\varphi_{i,1}^{(1)},\dots,\tilde\varphi_{i,1}^{(k_{i,1})},\dots,\tilde\varphi_{i,p_i}^{(1)},\dots,\tilde\varphi_{i,p_i}^{(k_{i,p_i})}  :B(E) \rightarrow\mathbb{C},$$ such that $\tilde\varphi_{i,j}^{(k)}[\varphi_{i,j}^{(k)}] = 1$, \(\tilde\varphi_{i,j}^{(k)}[\varphi_{\ell,n}^{(q)}]=0\) for each \((i,j,k)\neq (\ell,n,q)\). There also exists and a linear operator \(\mathcal{N}\), such that
\begin{itemize}
    \item for each $1\leq i \leq m, \, 1\leq k \leq k_{i,1}$, \, $\mathcal{N}\varphi_{i,1}^{(k)} = 0$;
    \item for each $1\leq i\leq m, \, 2\leq j \leq p_i, \, 1\leq k \leq k_{i,j}$ there exists\footnote{If $k_1 \neq k_2$ then $k^*(i, j, k_1) \neq k^*(i, j, k_2)$} $k^* \in \{1, \dots, k_{i, j-1}\}$ such that $\mathcal{N}\varphi_{i,j}^{(k)} = \varphi_{i,j-1}^{(k^*)}$;
    \item for $f \in \mathrm{ker}(\lambda_m)$, $\mathcal{N}f = 0$,
\end{itemize}
and, for any $x \in E$, $t \ge 0$ and $f \in B(E)$,
\begin{equation}\label{eq:gen-eigenfunctions}
  \psi_t[\varphi_{i,j}^{(k)}](x) = {\rm e}^{(\lambda_i+\mathcal{N}) t}\varphi_{i,j}^{(k)}(x), \quad \text{and}\quad \tilde\varphi_{i,j}^{(k)}[\psi_t[f]] = {\rm e}^{\lambda_i t}\tilde\varphi_{i,j}^{(k)}[{\rm e}^{\mathcal{N}t}f].
\end{equation}
Moreover, 
\begin{equation}
    \label{eq: h1b assum}
  \sup_{x \in E, f \in B_1(E)}\e^{-\mathrm{Re}\lambda_m t}\bigg|\psi_t[f](x) - \sum_{j=1}^{p_i}\e^{(\lambda_i+\mathcal{N})t}\Phi_{i}[f](x)\bigg|
  \to 0, \quad \text{as } t \to \infty,
\end{equation}
where 
\[\Phi_{i,j}[f](x) = \sum_{k=1}^{k_{i,j}}\tilde\varphi_{i,j}^{(k)}[f]\varphi_{i,j}^{(k)}(x), \quad \Phi_{i}[f](x)=\sum_{j=1}^{p_i}\Phi_{i,j}[f](x).\]
\end{enumerate}}

Let us spend some time unpacking the above assumption. First note that, as mentioned above, this allows for the case that the leading eigenvalue associated with $\psi$ is not simple. For simplicity, assume that $m=1$ in \ref{H1b}. In the case that $p_1 = k_{1, 1} = 1$, we recover \ref{PF}. In the case that $p_1 = 1$ but $k_{1, 1} > 1$, we extend \ref{PF} to the case where $\lambda_1$ is not simple. This case implies that $k_{1, 1}$ is both the algebraic and geometric multiplicity of $\lambda_1$, with corresponding eigenfunctions given by $\varphi^{(1)}, \dots, \varphi^{(k_{1, 1})}$ (note, we have dropped the subscript for simplicity). The fact that these are eigenfunctions can be deduced from \eqref{eq:gen-eigenfunctions} and the first property of $\mathcal N$. The case that $p_1 = 2$, say, the functions $\varphi_{1, 1}^{(1)}, \dots, \varphi_{1, 1}^{(k_{1, 1})}$ are eigenfunctions, but the functions $\varphi_{1, 2}^{(1)}, \dots, \varphi_{1, 2}^{(k_{1, 2})}$ are generalised eigenfunctions of rank $2$, meaning $\mathcal N\varphi_{1, 2}^{(k)} \neq 0$ but $\mathcal N^2\varphi_{1, 2}^{(k)} = 0$, which can be deduced from the first two properties of $\mathcal N$. In general, if $p_1 \ge 3$, then $\varphi_{1, j}^{(k)}$ is of rank $j$. Note that, in the case that $m=1$ and $p_1 \ge 2$, the geometric multiplicity of $\lambda_1$ is $k_{1, 1}$ and the algebraic multiplicity is {$\sum_{j = 1}^{p_1} k_{1, j}$.} 
Allowing $m \ge 2$ generalises this to $m$ eigenvalues \(\e^{\lambda_1 t},\dots,\e^{\lambda_m t}\), each with eigenfunctions $\varphi_{i, 1}^{(1)}, \dots, \varphi_{i, 1}^{(k_{i, 1})}$ and generalised eigenfunctions $\varphi_{i, j}^{(1)}, \dots, \varphi_{i, j}^{(k_{i, j})}$ of rank $j$. The asymptotic given in \eqref{eq: h1b assum} stipulates that, when normalised by the smallest growth rate ${\rm e}^{\mathrm{Re}\lambda_m t}$, the expectation semigroup converges.

\subsection{Notation}
We are interested in seeing how \ref{H1b} propagates to higher moments of $X_t$. We will shortly see that it will obey one of three possible dynamics depending on the growth rate of \(\psi_t[f]\). However, we first introduce some notation that will be used throughout the rest of the article. 

For \(k\geq 1\), $A \subseteq \mathbb [k]:= \{1, \dots, k\}$, and $\bs f \in B(E)^{k}$, we write
\[
  \psi_t^{(A)}[\bs f](x) = \mathbb E_{\delta_x}\left[\prod_{i \in A}X_t[f_i]\right], \quad t \ge 0.
\]
If $A = [k]$, we will write $\psi_t^{(k)}$ instead. 

Similarly to \cite{bmoments}, we will use an inductive argument to prove our results concerning the asymptotic behaviour of $\psi_t^{(k)}$. For this, we introduce the following notation.

For a set $A \subset \mathbb N$, we let $\mathcal P(A)$ denote the set of partitions of $A$ and set $\mathcal{P}^*(A) := \mathcal P(A) \setminus \{A\}$. Further, if $|A| \ge 2$ and $2 \le \ell \le |A|$, we set
\[
  B_{\ell, A} := \{\bs i = (i_1, \dots, i_\ell) \in A^{\ell}: i_j \neq i_k, j \neq k\}.
\]
For $\sigma \in \mathcal P(A)$, we write $|\sigma|$ for the number of blocks in $\sigma$ and $\sigma_j$ for the $j$-th block.
For $k \ge 1$, we write $\mathcal P(k)$, $\mathcal P^*(k)$ and $B_{\ell, k}$ in place of $\mathcal P([k])$, $\mathcal P^*([k])$ and $B_{\ell, [k]}$, respectively. Then, for $A \subset \mathbb N$ and a function $\bs g : \mathfrak P(A) \times E \to \mathbb C$, where $\mathfrak P(A)$ denotes the power set of $A$, we define
\begin{equation}
    \zeta_A[\bs g](x) = \mathcal E_x \left[\sum_{\sigma \in \mathcal P^*(A)} \sum_{\bs i \in B_{|\sigma|,N}}\prod_{j = 1}^{|\sigma|} \bs g(\sigma_j, x_{i_j})\right].
    \label{eq: zeta branching operator}
\end{equation}
The above operator will allow us to write $\psi_t^{(k)}$ in terms of the lower order moments, thus motivating inductive proofs. For similar reasons, we would like to develop asymptotics akin to that of \ref{H1b} but for the other eigenvalues. To this end, recall the notation from \ref{H1b} and define
\begin{align*}
    &\mathrm{Ei}^*(\lambda_1) = \{f \in B(E):\text{\(\Phi_{1,j}[f]\neq 0\) for some \(1\leq j \leq p_1\)}\},\\
    &\mathrm{Ei}^*(\lambda_{i}) = \{f \in B(E):\text{\(\Phi_{i,j}[f]\neq 0\) for some \(1\leq j \leq p_{i}\)}\}, \quad 2\leq i \leq m,
\end{align*}
and 
\begin{align*}
    \mathrm{Ei}(\lambda_1) =\mathrm{Ei}^*(\lambda_1), \quad \mathrm{Ei}(\lambda_{i}) =\mathrm{Ei}^*(\lambda_i) \setminus \bigcup_{\substack{\mathrm{Re}\lambda_j \geq   \mathrm{Re}\lambda_{i}\\\lambda_j\neq \lambda_{i}}}\mathrm{Ei}^*(\lambda_j), \quad 2 \leq i \leq m.
\end{align*}
We will also use the notation \(\mathrm{Ei}_1(\lambda_i)\) to denote \(\mathrm{Ei}(\lambda_i)\) restricted to functions bounded by unity. Then, thanks to \eqref{eq: h1b assum}, for \(1\leq i \leq m\), we have
\begin{equation}
    \sup_{x \in E,f \in \mathrm{Ei}_{1}(\lambda_{i})}\e^{-\mathrm{Re}\lambda_{i} t}\bigg|\psi_t[f](x) - \\e^{(\lambda_{i}+\mathcal{N})t}\Phi_{i}[f](x)\bigg|
  \to 0, \quad \text{as } t \to \infty. \label{eq: h1b assum 2}
\end{equation}

For \(1\leq i \leq m\) and \(f \in \mathrm{Ei}_1(\lambda_{i})\), if \(2\mathrm{Re}\lambda_{i}\geq \lambda_1\), then we will show that the asymptotic growth rate of the higher moments of \(X_t[f]\) depends on the growth rate of the semigroup \(\psi_t[f]\) given in \eqref{eq: h1b assum 2}. In this setting, we say that \(\lambda_{i}\) is {\it large} when $2\mathrm{Re}\lambda_{i} > \lambda_1$ and {\it critical} when $2\mathrm{Re}\lambda_{i} = \lambda_1$. If however, \(2\mathrm{Re}\lambda_{i}< \lambda_1\), then the asymptotic growth rate of the higher moments of \(X_t[f]\) depends instead on \(\lambda_1\) and \(p_1\). We say that \(\lambda_{i}\) is {\it small} when $2\mathrm{Re}\lambda_{i} < \lambda_1$. For this reason, we will work with a different function space for each of these three regimes, which we now define.

When it exists, let \(\tau\) be the unique index such that \(2\mathrm{Re\lambda}_{\tau-1}\geq \lambda_1\) but \(2\mathrm{Re\lambda}_{\tau}< \lambda_1\). Set
\begin{equation*}
    \mathrm{Ei}(\Lambda_{L}) := \bigcup_{2\mathrm{Re}\lambda_{i}>\lambda_1}\mathrm{Ei}(\lambda_{i}), \quad  \mathrm{Ei}(\Lambda_{C}) := \bigcup_{2\mathrm{Re}\lambda_{i}=\lambda_1}\mathrm{Ei}(\lambda_{i}),\quad  \mathrm{Ei}(\Lambda_{S}):= \mathrm{ker}(\lambda_{\tau-1}),
\end{equation*}
where ${\rm ker}(\lambda_{\ell})$ was defined just before \ref{H1b}, and
where if \(\tau\) is not well defined (i.e. there are no small eigenvalues), we set \(\mathrm{Ei}(\Lambda_{S})=\emptyset\). 

For \(f \in \mathrm{Ei}(\Lambda_{L}) \cup \mathrm{Ei}(\Lambda_{C})\), let \(\nu(f)\) be the unique index such that \(f \in \mathrm{Ei}(\lambda_{\nu(f)})\), set \(\lambda(f):=\lambda_{\nu(f)}\) and define
\begin{align*}
p(f):= \max \{i\geq 0: \mathcal{N}^{i-1}f \neq 0\}.
\end{align*} 
Then, for \(f \in (\mathrm{Ei}(\Lambda_{L}) \cup \mathrm{Ei}(\Lambda_{C}))^k\), define
\begin{align*}
    &\tilde \lambda(\bs f) = \sum_{i=1}^k \lambda(f), \quad \tilde p(\bs f) = \sum_{i=1}^k p(f).
\end{align*}

{Finally, for $\ell \ge 1$ with $\ell$ even and $\bs f \in \mathrm{Ei}(\Lambda_C)^{\ell}$, we say that $(\lambda( f_1), \dots, \lambda( f_{\ell}))$ is a {\it conjugate $\ell$-tuple} if one can split \((f_i)_{i=1}^{\ell}\) into $\ell/2$ pairs such that, for each pair \((f_i,f_{j})\), \(\lambda(f_i)=\bar \lambda(f_j)\).}

\section{Main results}\label{sec:main}

In this section, we will present our main results. We are interested in the asymptotic behaviour of $\psi_t^{(k)}$. As briefly mentioned in the previous section, this behaviour depends on whether we are in the large, critical or small regime. For this reason, we will only consider the asymptotics of $\psi_t^{(k)}[\bs f]$ for $\bs f \in {\rm Ei}(\Lambda_L)^k \cup {\rm Ei}(\Lambda_C)^k \cup {\rm Ei}(\Lambda_S)^k$. In the case that we have \(\bs f \in B_1(E)^k\), the same proof techniques can be used, however this would result in significantly longer theorems and proofs, since many more cases need to be considered.

From here on we assume that \ref{H1b} is in force.


\begin{theorem}[Large Regime]
\label{theorem: large regime}
 Assume that for some \(k\geq 2\), \[\sup_{x\in E}\mathcal{E}_x[N^k]<\infty.\] For \(1\leq \ell \leq k\) and $t \ge 0$ set 
\begin{equation*}
   \Delta_{\ell,t}=\sup _{x \in E, \bs f \in \mathrm{Ei}_1(\Lambda_L)^{\ell}}\left|\e^{-\tilde\lambda(\bs f) t}(1+t)^{\tilde{p}(\bs f)-\ell}\psi_t^{(\ell)}[\bs f](x)-\prod_{j=1}^{\ell}(p(f_j)-1)!L_{[\ell]}[\bs f^*](x)\right|,
\end{equation*}
where, for \(1\leq i \leq \ell\),
\(L_{\{i\}}[ \bs f](x) = \Phi_{\nu(f_i)}[f_i](x)\), and for $A \subseteq [\ell]$ with \(2 \le |A|\le \ell\),
\begin{equation}
\label{eq: el functions}
    L_A[\bs f](x) = \int^{\infty}_{0}\e^{-\tilde\lambda(\bs f) s}\psi_s\F[\gamma \zeta_{A}[L_{\cdot}[(\e^{-{\mathcal{N}s}}f_1,\dots,\e^{-{\mathcal{N}s}}f_{\ell})]]\R](x)\mathrm{d}s,
\end{equation}
and where
\begin{equation}
\label{eq: f star}
    \bs f^* = (\mathcal{N}^{p(f_1)-1}f_1,\dots,\mathcal{N}^{p(f_{\ell})-1}f_{\ell}).
\end{equation}
Then, for \(1\leq \ell \leq k\), 
\begin{equation*}
    \sup_{t \geq 0} \Delta_{\ell,t}<\infty \text { and } \lim _{t \rightarrow \infty} \Delta_{\ell,t}=0.
\end{equation*}   
\end{theorem}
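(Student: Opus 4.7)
The plan is to prove the theorem by induction on $\ell$, with the base case $\ell = 1$ following directly from \ref{H1b}. For $f \in \mathrm{Ei}_1(\Lambda_L)$, the refined asymptotic \eqref{eq: h1b assum 2} gives $\psi_t[f](x) = e^{(\lambda(f)+\mathcal{N})t}\Phi_{\nu(f)}[f](x) + o(e^{\mathrm{Re}\,\lambda(f)\,t})$ uniformly in $x$ and $f$. Since $\mathcal{N}$ is nilpotent of height $p(f)$ on the relevant generalised eigenspace, $e^{\mathcal{N}t}\Phi_{\nu(f)}[f]$ is a polynomial in $t$ of degree $p(f) - 1$; extracting its leading term yields both the claimed identification of the limit and the uniform boundedness of $\Delta_{1,t}$.

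For the inductive step, fix $2 \le \ell \le k$, assume the result for all smaller indices, and begin from the Dynkin-type first-branching decomposition
\[
\psi_t^{(\ell)}[\bs f](x) = \psi_t\bigl[{\textstyle\prod_{i=1}^\ell f_i}\bigr](x) + \int_0^t \psi_{t-s}\bigl[\gamma\,\zeta_{[\ell]}[\psi_s^{(\cdot)}[\bs f]]\bigr](x)\,\mathrm{d}s.
\]
Because $\zeta_{[\ell]}$ restricts the sum to $\mathcal{P}^*([\ell])$, every factor inside the integrand is a moment semigroup of order strictly less than $\ell$. In the large regime each $f_i$ satisfies $2\mathrm{Re}\,\lambda(f_i) > \lambda_1$, so $\mathrm{Re}\,\tilde\lambda(\bs f) > \ell\lambda_1/2 \ge \lambda_1$ for $\ell \ge 2$, and hence the first term is $O(e^{\lambda_1 t})$ by \ref{H1b} and is negligible after dividing by $e^{\tilde\lambda(\bs f) t}(1+t)^{\tilde p(\bs f)-\ell}$.

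Substituting the inductive hypothesis into each $\psi_s^{(\sigma_j)}[\bs f_{\sigma_j}]$ appearing under $\zeta_{[\ell]}$ and changing variable to $u = t-s$ produces a rescaled integrand whose pointwise limit is precisely the expression $L_{[\ell]}[\bs f^*](x)$ from \eqref{eq: el functions}. The key combinatorial observation is that $\prod_j (1+s)^{\tilde p(\bs f_{\sigma_j})-|\sigma_j|} = (1+s)^{\tilde p(\bs f)-\ell}$, since a partition of $[\ell]$ preserves the total $\sum_i(p(f_i)-1)$, so these Jordan polynomial factors are absorbed by the ratio $(1+t-u)^{\tilde p(\bs f)-\ell}/(1+t)^{\tilde p(\bs f)-\ell}\to 1$ after the change of variable, while the shifts $e^{-\mathcal{N}u}f_i$ in the definition of $L_A$ precisely bookkeep the polynomial-in-$u$ action of $\mathcal{N}$ inside the projectors $\Phi_{\nu(f_i)}$. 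Integrability of $L_{[\ell]}$ on $[0,\infty)$ is then ensured by the strict inequality $\mathrm{Re}\,\tilde\lambda(\bs f) > \lambda_1$, which dominates the $e^{\lambda_1 u}\,\mathrm{poly}(u)$ growth of $\psi_u[\gamma\,\cdot]$ furnished by \ref{H1b}.

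The principal obstacle is uniformity: one must control the errors $\Delta_{\ell',s}$ supplied by the inductive hypothesis uniformly in $s$ so that dominated convergence can be applied under the integral sign, and one must simultaneously verify that no partition $\sigma \in \mathcal{P}^*([\ell])$ with $|\sigma| < \ell$ contributes at a rate faster than $e^{\tilde\lambda(\bs f)t}(1+t)^{\tilde p(\bs f)-\ell}$, which is guaranteed in the large regime by the spectral gap $\mathrm{Re}\,\tilde\lambda(\bs f)>\lambda_1$. The moment hypothesis $\sup_x \mathcal{E}_x[N^k]<\infty$ supplies a uniform bound on $\zeta_{[\ell]}$ applied to bounded inputs, which provides the dominating function needed; carefully combining this with the Jordan-block bookkeeping and the convergence of the polynomial ratio is the main technical step of the induction.
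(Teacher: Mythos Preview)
Your argument is correct and follows the same inductive skeleton as the paper—using the evolution equation of Lemma~\ref{lem:evo-2} to reduce the $\ell$-th moment to lower-order moments, then invoking dominated convergence with the spectral gap $\mathrm{Re}\,\tilde\lambda(\bs f) > \lambda_1$ supplying integrability. The paper, however, does not carry out the direct induction you describe. Instead it first proves an intermediate result (Lemma~\ref{lemma: large regime}) about the \emph{twisted} moments
\[
\tilde\Delta_{\ell,t}=\sup_{x,\bs f}\bigl|\e^{-\tilde\lambda(\bs f)t}\psi_t^{(\ell)}[(\e^{-\mathcal{N}t}f_1,\dots,\e^{-\mathcal{N}t}f_\ell)](x)-L_{[\ell]}[\bs f](x)\bigr|.
\]
Because $\e^{-\mathcal{N}t}$ exactly cancels the polynomial-in-$t$ part of the semigroup action on generalised eigenfunctions, this twisted quantity converges to $L_{[\ell]}[\bs f]$ with \emph{no polynomial prefactor at all}; the inductive step then needs only the exponential decay $\e^{(\lambda_1-\mathrm{Re}\tilde\lambda(\bs f))s}$ and never has to track the ratio $(1+t-u)^{\tilde p(\bs f)-\ell}/(1+t)^{\tilde p(\bs f)-\ell}$ that you manage explicitly. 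The theorem is subsequently extracted by specialising the lemma to $\bs f^*$ (for which $\e^{-\mathcal{N}t}f_i^* = f_i^*$) and combining with the elementary asymptotic $t^{-(p(f)-1)}\e^{\mathcal{N}t}f \to (p(f)-1)!^{-1}\mathcal{N}^{p(f)-1}f$. Your direct route avoids the auxiliary lemma at the cost of carrying the Jordan-block bookkeeping through the dominated-convergence argument; the paper's route isolates that bookkeeping into a single final step, which keeps the uniformity-in-$\bs f$ cleaner, but both approaches are sound and yield the same result.
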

\bigskip

\begin{theorem}[Small Regime] 
\label{theorem: small regime}
Assume that for some \(k\geq 2\), \[\sup_{x\in E}\mathcal{E}_x[N^k]<\infty.\] 
For $t \ge 0$ and \(2\leq \ell \leq k\), let 
\begin{equation}
   \Delta_{\ell,t}=\sup _{x \in E, \bs f \in \mathrm{Ei}_1(\Lambda_S)^{\ell}}\left|\e^{-\frac{\ell\lambda_1}{2}}(1+t)^{-\frac{\ell (p_1-1)}{2}}\psi_t^{(\ell)}[\bs f](x)-L_{[\ell]}[\bs f](x)\right|, \label{eq: small regime 2 moment 2}
\end{equation}
where $\lambda_1$ and $p_1$ were given in \ref{H1b},
and, for $A \subseteq \{1, \dots, \ell\}$ with $|A|$ even, if $|A| = 2$, we have
\begin{equation*}
    L_A[\bs f](x) = (p_1-1)!\Phi_{1,1}[\mathcal{N}^{p_1-1}(f_{A_1}f_{A_2})](x)+(p_1-1)!\int_0^{\infty}\e^{-\lambda_1 s}\Phi_{1,1}[\mathcal{N}^{p_1-1}(\gamma \zeta_A[\psi_s^{(\cdot)}[\bs f]])](x)\mathrm{d}s, 
\end{equation*}
and for $|A| \ge 4$, we have
\[
L_A[\bs f](x) = 
    \int^{\infty}_{0}\e^{-\frac{ \lambda_1 |A|}{2}}\psi_s\F[\gamma \zeta_A[L_\cdot[\bs f]]\R](x)\mathrm{d}s.
\]
Otherwise, $L_A[\bs f](x) = 0$.
Then, for \(1\leq \ell \leq k\), 
\begin{equation*}
    \sup_{t \geq 0} \Delta_{\ell,t}<\infty \text { and } \lim _{t \rightarrow \infty} \Delta_{\ell,t}=0.
\end{equation*}
\end{theorem}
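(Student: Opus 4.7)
The plan is strong induction on $\ell$ starting from $\ell = 2$, closely paralleling the argument for Theorem~\ref{theorem: large regime} but with two modifications crucial to the small regime: the dominant growth of $\psi_t^{(\ell)}$ is driven by the leading eigenvalue $\lambda_1$ rather than by $\tilde\lambda(\bs f)$, and only partitions all of whose blocks have even size contribute at leading order. The backbone is the many-to-few decomposition
\begin{equation*}
\psi_t^{(\ell)}[\bs f](x) = \psi_t\!\left[\prod_{i=1}^\ell f_i\right]\!(x) + \int_0^t \psi_{t-s}\!\left[\gamma\,\zeta_{[\ell]}\!\left[\psi_s^{(\cdot)}[\bs f]\right]\right]\!(x)\,\mathrm d s,
\end{equation*}
which decomposes $\psi_t^{(\ell)}$ into a \emph{free} term $\psi_t[\prod_i f_i]$ plus contributions indexed by the non-trivial partitions inside $\zeta_{[\ell]}$. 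Unlike in the large regime, the free term cannot be discarded: each $f_i$ lies in $\mathrm{ker}(\lambda_{\tau-1})$, but a product such as $f_1 f_2$ generically does not, and this is where the $\Phi_{1,1}[\mathcal N^{p_1-1}(\cdot)]$ piece of $L_A$ (for $|A|=2$) originates.

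For the base case $\ell = 2$, I would extract the asymptotic of $\psi_t[f_1 f_2]$ from \eqref{eq: h1b assum} by Taylor expanding $\e^{(\lambda_1+\mathcal N)t} = \e^{\lambda_1 t}\sum_{k\ge 0}(t\mathcal N)^k/k!$ on the leading eigenspace, where $\mathcal N$ is nilpotent of index $p_1$: only the $k = p_1 - 1$ term survives the normalisation by $\e^{-\lambda_1 t}(1+t)^{-(p_1-1)}$, and tracking it via the biorthogonality of $\{\varphi_{i,j}^{(k)},\tilde\varphi_{i,j}^{(k)}\}$ produces exactly the first piece $(p_1-1)!\,\Phi_{1,1}[\mathcal N^{p_1-1}(f_1 f_2)](x)$ of $L_{[2]}[\bs f]$. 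For the integral term, \eqref{eq: h1b assum 2} applied to the kernels of the intermediate eigenvalues yields $\psi_s[f_i] = O(\e^{\mathrm{Re}\lambda_\tau s}\mathrm{poly}(s))$, so the integrand is bounded by $\e^{\lambda_1(t-s)}(1+t-s)^{p_1-1}\e^{2\mathrm{Re}\lambda_\tau s}\mathrm{poly}(s)$; the strict gap $2\mathrm{Re}\lambda_\tau < \lambda_1$ makes this $s$-integrable after dividing by $\e^{\lambda_1 t}(1+t)^{p_1-1}$, and dominated convergence produces the second piece of $L_{[2]}[\bs f]$.

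For the inductive step $\ell\ge 3$, I would split the partitions inside $\zeta_{[\ell]}$ by block-size parity. When every block $\sigma_j$ has even size, the inductive hypothesis converts each $\psi_s^{(\sigma_j)}[\bs f_{\sigma_j}]$ into $\e^{|\sigma_j|\lambda_1 s/2}(1+s)^{|\sigma_j|(p_1-1)/2}L_{\sigma_j}[\bs f_{\sigma_j}]$ up to a vanishing error; multiplying over $j$ reconstitutes the target prefactor $\e^{\ell\lambda_1 s/2}(1+s)^{\ell(p_1-1)/2}$, and pushing through $\psi_{t-s}$ (which contributes its own leading $(p_1-1)!\Phi_{1,1}\mathcal N^{p_1-1}$ projection) and integrating yields exactly the $|A|\ge 4$ recursion in the definition of $L_{[\ell]}[\bs f]$. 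When $\sigma$ contains any odd-sized block, its contribution must be shown to be of lower order, so the odd-$\ell$ claim ($L_{[\ell]}=0$) has to be proved simultaneously with the even-$\ell$ one, and the two cases are therefore run together in a single induction. The free term $\psi_t[\prod_i f_i]$ is subdominant once $\ell\ge 3$, since it grows at most like $\e^{\lambda_1 t}(1+t)^{p_1-1}$, strictly slower than the target scaling $\e^{\ell\lambda_1 t/2}(1+t)^{\ell(p_1-1)/2}$.

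The principal obstacle will be the uniform-in-$(t,x,\bs f)$ bookkeeping required to establish $\sup_t \Delta_{\ell,t} < \infty$ before extracting $\lim_t \Delta_{\ell,t} = 0$. Because both the normalisation and the small-regime single-particle decay carry polynomial factors of degree up to $p_1 - 1$, and because each inductive error is integrated against a $\psi_{t-s}$ that itself carries such a polynomial, one must verify that the strict gap $\lambda_1 - 2\mathrm{Re}\lambda_\tau > 0$ is strong enough to absorb every polynomial correction generated by $\mathcal N$. A secondary technical point is the combinatorics of $\zeta_{[\ell]}$; the moment assumption $\sup_x \mathcal E_x[N^k]<\infty$ supplies the offspring-moment bounds needed to close the uniform estimate at each step.
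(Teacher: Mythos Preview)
Your overall strategy---induction on $\ell$ via the evolution equation of Lemma~\ref{lem:evo-2}, with the base case $\ell=2$ handled through the nilpotent expansion of $\e^{(\lambda_1+\mathcal N)t}$ and the spectral gap $2\mathrm{Re}\lambda_\tau<\lambda_1$---matches the paper's approach. The paper in fact proves the base case via an intermediate statement (Lemma~\ref{lemma: second moment small lemma}) that tracks the full operator $\e^{-(\lambda_1+\mathcal N)t}\psi_t^{(2)}$ before extracting the leading polynomial coefficient, but your more direct route is equivalent.

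There is, however, a genuine error in your description of the inductive step for $\ell\ge 4$. You claim that after applying the inductive hypothesis to the inner moments $\psi_s^{(\sigma_j)}$, one ``pushes through $\psi_{t-s}$, which contributes its own leading $(p_1-1)!\,\Phi_{1,1}\mathcal N^{p_1-1}$ projection''. This is not what happens: look at the target limit
\[
L_A[\bs f](x)=\int_0^\infty \e^{-\frac{|A|\lambda_1}{2}s}\,\psi_s\!\left[\gamma\,\zeta_A[L_\cdot[\bs f]]\right]\!(x)\,\mathrm ds,
\]
which contains the \emph{full} semigroup $\psi_s$, not its leading projection. The reason is that the integrand in your orientation carries the exponential $\e^{\ell\lambda_1 s/2+\lambda_1(t-s)}=\e^{\lambda_1 t}\e^{\lambda_1(\ell/2-1)s}$, which for $\ell\ge 3$ is maximised at $s=t$; the dominant contribution therefore comes from the region where the outer semigroup time $t-s$ is \emph{small}, and no spectral projection can be extracted there. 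After the change of variable $u=t-s$, the inner moments sit at time $t-u\to\infty$ (so the inductive hypothesis applies and replaces them by the $L_{\sigma_j}$), while the outer $\psi_u$ remains at finite time; dominated convergence in $u$, using $\e^{-\ell\lambda_1 u/2}\|\psi_u[\cdot]\|_\infty\le C\e^{-(\ell/2-1)\lambda_1 u}(1+u)^{p_1-1}$, then produces the stated $L_{[\ell]}$. This is exactly the mechanism of the large-regime inductive step (Theorem~\ref{theorem: large regime}), which is why the paper simply refers back to it. The projection onto $\Phi_{1,1}\mathcal N^{p_1-1}$ is specific to the $|A|=2$ case, where the inner factors $\psi_s[f_i]$ grow only like $\e^{\mathrm{Re}\lambda_\tau s}$, so the integrand is concentrated at the \emph{opposite} end $s$ near $0$ and it is the outer semigroup at large time that dominates.
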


\begin{remark}~ 
\label{remark : odd small moments}
 Note that under a different scaling one can use the techniques presented here to obtain convergence of the odd moments in the small regime. This scaling depends on whether \(\mathrm{Re}\lambda_{\tau}\) is greater than, equal to, or less than \(0\), and we see a similar trichotomy as in this article. We omit these results, since it would make the presentation much more technical whilst providing little gain.
\end{remark}


\bigskip

\begin{theorem}[Critical Regime]
\label{theorem: critical regime}
Assume that for some \(k\geq 2\), \[\sup_{x\in E}\mathcal{E}_x[N^k]<\infty.\] For \(2\leq \ell \leq k\), let 
\begin{equation}
   \Delta_{\ell,t}=\sup _{x \in E, \bs f \in \mathrm{Ei}_1(\Lambda_C)^{\ell}}\left|\e^{-\frac{\ell\lambda_1}{2}}(1+t)^{-\F(p(\bs f)+\frac{\ell (p_1-2)}{2}\R)}\psi_t^{(\ell)}[\bs f](x)-L_{[\ell]}[\bs f](x)\right|, \label{eq: critical regime 2 moment 2}
\end{equation}
where $\lambda_1$ and $p_1$ were given in \ref{H1b}, and, for $A \subseteq \{1, \dots, \ell\}$ with $|A|$ even, we have
\begin{equation*}
   L_A[\bs f](x)  = \frac{\Phi_{1,1}[\mathcal{N}^{p_1-1}(\gamma \zeta_{[2]}[\Phi_{\lambda(\cdot),1}[\bs f^*]])](x)(\tilde p(\bs f)-2)!}{(p(f_1)-1)!(p(f_2)-1)!(p_1+\tilde p(\bs f)-2)!}, \quad |A| = 2, \quad (f_i:i\in A)\text{ conjugate \(2\)-tuple},
\end{equation*}
where \(\bs f^* \) is as in \eqref{eq: f star}, and for $|A| \ge 4$, 
\begin{align*}
    & L_A[\bs f](x) = \int^{\infty}_{0}\e^{-\frac{ \lambda_1 |A|}{2}}\psi_s\F[\gamma \zeta_A[L_\cdot[\bs f]]\R](x)\mathrm{d}s, \quad (f_i:i\in A) \text{ conjugate \(|A|\)-tuple}.
\end{align*}
Otherwise \(L_A[\bs f]=0\).
Then, for \(1\leq \ell \leq k\), 
\begin{equation*}
    \sup_{t \geq 0} \Delta_{\ell,t}<\infty \text { and } \lim _{t \rightarrow \infty} \Delta_{\ell,t}=0.
\end{equation*}
\end{theorem}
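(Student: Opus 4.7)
The plan is to induct on $\ell$, following the blueprint of Theorem~\ref{theorem: large regime} but with critical-regime bookkeeping. Throughout, I use the standard Duhamel identity
\[
\psi_t^{(\ell)}[\bs f](x) = \psi_t\F[\prod_{i=1}^{\ell} f_i\R](x) + \int_0^t \psi_{t-s}\F[\gamma\zeta_{[\ell]}[\psi_s^{(\cdot)}[\bs f]]\R](x)\,\intd s,
\]
which reduces the $\ell$-th moment to lower-order moments via the branching operator $\zeta$. By \ref{H1b}, the first term is bounded by $C\e^{\lambda_1 t}(1+t)^{p_1-1}$, which is strictly dominated by the target rate $\e^{\ell\lambda_1 t/2}(1+t)^{p(\bs f)+\ell(p_1-2)/2}$ for every $\ell\geq 2$ (using $p(\bs f)\geq \ell$), and so vanishes under the rescaling.

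The base case is $\ell = 2$. Inserting $\psi_s[f_i](y) \sim \e^{\lambda(f_i)s}\frac{s^{p(f_i)-1}}{(p(f_i)-1)!}\Phi_{\nu(f_i),1}[f_i^*](y)$ from \eqref{eq: h1b assum 2} inside $\zeta_{[2]}$ produces an $s$-integrand with leading behaviour $\e^{(\lambda(f_1)+\lambda(f_2))s}s^{p(\bs f)-2}$. For a conjugate pair $\lambda(f_1)+\lambda(f_2)=\lambda_1$; in the non-conjugate case the oscillation $\e^{i\,\mathrm{Im}(\lambda(f_1)+\lambda(f_2))s}$ costs one polynomial degree after $s$-integration by a Riemann--Lebesgue argument, so only conjugate pairs survive at leading order. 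Applying $\psi_{t-s}$ and expanding $\e^{\mathcal{N}(t-s)}\Phi_1$ through ranks $r = 0,\dots,p_1-1$, the Beta-function identity
\[
\int_0^t s^{p(\bs f)-2}(t-s)^r\,\intd s = t^{r+p(\bs f)-1}\frac{r!\,(p(\bs f)-2)!}{(r+p(\bs f)-1)!}
\]
shows that only $r = p_1 - 1$ reaches the leading power $t^{p_1+p(\bs f)-2}$. Using the Jordan-block identity $\mathcal{N}^{p_1-1}\Phi_1 = \Phi_{1,1}\mathcal{N}^{p_1-1}$ and collecting factorials then yields precisely the $L_{\{1,2\}}[\bs f]$ stated in the theorem.

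For the inductive step ($\ell \geq 3$), I expand $\zeta_{[\ell]}$ over $\sigma \in \mathcal{P}^*([\ell])$ and substitute the inductive asymptotic for each $\psi_s^{(\sigma_j)}[\bs f_{\sigma_j}]$. Since by induction $L_{\sigma_j} = 0$ whenever $|\sigma_j|$ is odd or $(f_i : i \in \sigma_j)$ is not a conjugate tuple, only partitions into conjugate even blocks contribute at leading order, giving joint rate $\e^{\ell\lambda_1 s/2}(1+s)^{p(\bs f)+\ell(p_1-2)/2}$. The resulting integrand has the form $\e^{\lambda_1 t}\e^{\lambda_1 s}(t-s)^r s^{a}$ with $a$ large; the substitution $u = t-s$ then yields $\int_0^\infty \e^{-\lambda_1 u}u^r\,\intd u = r!/\lambda_1^{r+1}$, and summing over $r \in \{0,\dots,p_1-1\}$ reconstructs exactly $\int_0^\infty \e^{-\ell\lambda_1 s/2}\psi_s[\gamma\zeta_{[\ell]}[L_\cdot[\bs f]]](x)\,\intd s = L_{[\ell]}[\bs f](x)$. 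The structural difference from the base case is that here all $p_1$ ranks $r$ contribute, because the $s$-polynomial power from multiple pair asymptotics is already saturated, whereas for $|A|=2$ only the top rank does.

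The chief obstacle is uniformity: the $o(\cdot)$ errors in \eqref{eq: h1b assum 2}, the dominated contributions from non-conjugate or odd-block partitions, and the sub-leading $\psi_{t-s}$ ranks must all vanish uniformly in $x \in E$ and $\bs f \in \mathrm{Ei}_1(\Lambda_C)^\ell$ after rescaling. This is handled as in the Large Regime: the moment bound $\sup_x \mathcal{E}_x[N^k] < \infty$ controls $\zeta_A$ uniformly in its arguments, and dominated convergence transports the uniformity of \ref{H1b} through the Duhamel integral. The bookkeeping is more delicate here than in the large regime because conjugate-pair versus non-pair partitions are separated only by polynomial powers in $s$ (or by Riemann--Lebesgue decay of oscillatory integrals), not by exponential rates, so careful tracking of the Jordan-block polynomial degrees is essential.
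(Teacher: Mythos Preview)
Your approach is essentially the paper's: establish $\ell=2$ from the evolution equation via a Beta-integral computation (this is the paper's Lemma~\ref{lemma: second moment critical} with $\alpha=0$), then run the induction for $\ell\geq 3$ by mimicking the Large Regime argument, exactly as the paper does when it defers to the proof of Theorem~\ref{theorem: large regime}. Your base case, including the Riemann--Lebesgue treatment of non-conjugate pairs and the Jordan-rank selection $r=p_1-1$, matches the paper's computation.

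There is, however, a slip in your inductive step. When you write the integrand as $\e^{\lambda_1 t}\e^{\lambda_1 s}(t-s)^r s^a$ and then ``sum over $r\in\{0,\dots,p_1-1\}$'', you have replaced $\psi_{t-s}$ by its leading spectral piece $\e^{(\lambda_1+\mathcal N)(t-s)}\Phi_1$. But $L_{[\ell]}$ is defined with the \emph{full} semigroup $\psi_s$: the sub-leading eigenprojections $\Phi_i$ for $i\geq 2$ and the $o(\e^{\mathrm{Re}\lambda_m s})$ remainder from \ref{H1b} all contribute to $\int_0^\infty \e^{-\ell\lambda_1 s/2}\psi_s[\cdot]\,\intd s$ (they are integrable precisely because $\ell\geq 3$ gives $\lambda_1<\ell\lambda_1/2$). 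So your sum over ranks does \emph{not} reconstruct $L_{[\ell]}$. The fix is the one the Large Regime proof uses: put $\psi_s$ on the outside and $\psi_{t-s}^{(\cdot)}$ on the inside, apply the inductive hypothesis only to the inner factor, and let the rescaled integral converge directly to $\int_0^\infty \e^{-\ell\lambda_1 s/2}\psi_s[\gamma\zeta_{[\ell]}[L_\cdot[\bs f]]]\,\intd s$ without expanding $\psi_s$ at all. Your own substitution $u=t-s$ already accomplishes this swap, so the rank expansion is unnecessary as well as wrong. (Separately, the exponential factor is $\e^{(\ell/2-1)\lambda_1 s}$ in general; your $\e^{\lambda_1 s}$ and the resulting $\int_0^\infty \e^{-\lambda_1 u}u^r\,\intd u$ are specific to $\ell=4$.)
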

Similarly to Remark \ref{remark : odd small moments}, we can use the techniques presented here to obtain convergence of the odd moments under a different scaling. In this setting, the conjugate tuple property extends by asking for the final eigenvalue to be real and equal to \(\lambda_1/2\).

\section{Proofs}
This section is dedicated to the proofs of the main results. The proofs will follow similar ideas to those presented in \cite{bmoments}. The main idea is to use an inductive argument using Lemma \ref{lem:evo-2}, with the base case given by \ref{H1b}. Before starting the proofs, we introduce some notation and several useful inequalities which will be used throughout. We use \(C\) for a generic non-negative constant that may change between equations. For multiple generic constants, we use the notation \(C_1,C_2,\dots .\) Firstly, for \(\bs f \in (\mathrm{Ei}(\Lambda_{L})\cup \mathrm{Ei}(\Lambda_{C}))^k\), and \(A \subseteq [k]\), we extend the definitions of $\tilde p(\bs f)$ and $\tilde\lambda(\bs f)$ as follows:
\begin{align*}
    &\tilde p(\bs f ,A) = \sum_{i \in A}p(f_i),\\
    &\tilde \lambda(\bs f ,A)=\sum_{i \in A}\lambda(f_i).
\end{align*}
Next, since \(\mathcal{N}\) is non-zero on a finite dimensional space, it is bounded and therefore there exists a constant \(C\) such that, for any \(f\in B(E)\),
\begin{equation}
\label{eq: bound on nilpot oper}
    \|\e^{\mathcal{N}t}f\|_{\infty} \leq C (1+t)^{p(f)-1}\|f\|_{\infty}, \quad t \in \mathbb{R}.
\end{equation}
This and \ref{H1b} imply the existence of a further constant \(C\) such that, for \(1\leq i \leq m\) and \(f\in\mathrm{Ei}(\lambda_{i})\), 
\begin{equation}
\label{eq: control on semigroup}
   \|\psi_t[f]\|_{\infty} \leq C \e^{\mathrm{Re}\lambda_{i} t}(1+t)^{p(f)-1}\|f\|_{\infty},
\end{equation}
where in particular we have used \eqref{eq: h1b assum 2}. Finally, for any \(k\geq 2\), we have that
\begin{equation}
    \zeta_{[k]}[1](x) = \mathcal E_x \left[\sum_{\sigma \in \mathcal P^*(A)} \sum_{\bs i \in B_{|\sigma|,N}} 1\right]\leq \mathcal E_x[N^k].
    \label{eq: zeta branching operator bound}
\end{equation}

\subsection{Proof of Theorem \ref{theorem: large regime}} 
We first prove an intermediate result that is of a similar form to Theorem \ref{theorem: large regime}.

\begin{lemma}\label{lemma: large regime}
Assume that for some \(k\geq 2\), \[\sup_{x\in E}\mathcal{E}_x[N^k]<\infty.\] For \(1\leq \ell \leq k\) and $t \ge 0$ set 
\begin{equation*}
   \tilde\Delta_{\ell,t}=\sup _{x \in E, \bs f \in \mathrm{Ei}_1(\Lambda_L)^{\ell}}\left|\e^{-\tilde\lambda(\bs f) t}\psi_t^{(\ell)}[(\e^{-{\mathcal{N}t}}f_1,\dots,\e^{-{\mathcal{N}t}}f_{\ell})](x)-L_{[\ell]}[\bs f](x)\right|,
\end{equation*}
where the $L_{[\ell]}$ were defined in Theorem \ref{theorem: large regime}. 
Then, for \(1\leq \ell \leq k\), 
\begin{equation*}
    \sup_{t \geq 0} \Delta_{\ell,t}<\infty \text { and } \lim _{t \rightarrow \infty} \Delta_{\ell,t}=0.
\end{equation*}
\end{lemma}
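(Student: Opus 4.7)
The plan is to induct on $\ell$, using \ref{H1b} as the base case and an evolution equation (the forthcoming Lemma \ref{lem:evo-2}) of the shape
\[
\psi_t^{(\ell)}[\bs g](x) = \psi_t\bigl[\textstyle\prod_{i=1}^{\ell}g_i\bigr](x) + \int_0^t \psi_{t-s}\bigl[\gamma\, \zeta_{[\ell]}[\psi_s^{(\cdot)}[\bs g]]\bigr](x)\,\mathrm ds
\]
driving the inductive step.

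For $\ell=1$, the claim reduces to showing $|\e^{-\lambda(f)t}\psi_t[\e^{-\mathcal N t}f](x)-\Phi_{\nu(f)}[f](x)|\to 0$ uniformly. The spectral projector $\Phi_{\nu(f)}$ commutes with $\mathcal N$ (both preserve each generalised eigenspace), so $\e^{(\lambda(f)+\mathcal N)t}\Phi_{\nu(f)}[\e^{-\mathcal N t}f]=\e^{\lambda(f)t}\Phi_{\nu(f)}[f]$, and the convergence follows from \eqref{eq: h1b assum 2} applied to $\e^{-\mathcal N t}f$: the polynomial norm inflation from \eqref{eq: bound on nilpot oper} is absorbed by the exponential gap between $\mathrm{Re}\lambda(f)$ and the next smaller real part in the spectrum. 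The uniform-in-$t$ bound then comes from continuity of $\psi$ on compact time intervals together with the vanishing at infinity.

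For $\ell\geq 2$ suppose the lemma holds at all orders $\ell'<\ell$. Substituting $\bs g=(\e^{-\mathcal N t}f_i)_{i=1}^{\ell}$ into the evolution equation and multiplying through by $\e^{-\tilde\lambda(\bs f)t}$ produces two contributions. The first, $\e^{-\tilde\lambda(\bs f)t}\psi_t[\prod_i\e^{-\mathcal N t}f_i](x)$, vanishes uniformly by \eqref{eq: control on semigroup} and \eqref{eq: bound on nilpot oper}, because $\mathrm{Re}\tilde\lambda(\bs f)>\ell\lambda_1/2\geq\lambda_1$ in the large regime for $\ell\geq 2$. For the integral term, change variables to $u=t-s$ and decompose $\e^{-\mathcal N t}=\e^{-\mathcal N u}\e^{-\mathcal N(t-u)}$ on each input. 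Applying the induction hypothesis blockwise to each $\sigma_j$ of each partition $\sigma\in\mathcal P^*([\ell])$ appearing inside $\zeta_{[\ell]}$ yields
\[
\psi_{t-u}^{(\sigma_j)}\bigl[(\e^{-\mathcal N t}f_i)_{i\in\sigma_j}\bigr](x) = \e^{\tilde\lambda(\bs f,\sigma_j)(t-u)}\Bigl(L_{\sigma_j}\bigl[(\e^{-\mathcal N u}f_i)_{i\in\sigma_j}\bigr](x)+\epsilon_{\sigma_j}(x,t,u)\Bigr),
\]
with $\|\epsilon_{\sigma_j}\|_\infty\to 0$ uniformly in its arguments as $t-u\to\infty$. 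Since $\sigma$ partitions $[\ell]$, one has $\sum_j\tilde\lambda(\bs f,\sigma_j)=\tilde\lambda(\bs f)$, so the product of exponential prefactors collapses $\e^{-\tilde\lambda(\bs f)t}\prod_j\e^{\tilde\lambda(\bs f,\sigma_j)(t-u)}$ down to $\e^{-\tilde\lambda(\bs f)u}$. The integral then converges to $\int_0^\infty\e^{-\tilde\lambda(\bs f)u}\psi_u[\gamma\,\zeta_{[\ell]}[L_\cdot[(\e^{-\mathcal N u}f_i)_i]]](x)\,\mathrm du=L_{[\ell]}[\bs f](x)$ by the definition \eqref{eq: el functions}.

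The main obstacle is the dominated-convergence argument required both to pass the limit inside the integral and to obtain the uniform bound $\sup_t\tilde\Delta_{\ell,t}<\infty$. Combining \eqref{eq: control on semigroup}, \eqref{eq: bound on nilpot oper}, \eqref{eq: zeta branching operator bound}, the $k$-th moment assumption on $N$, and the inductive bound on the lower-order $\tilde\Delta_{\ell',t}$, the integrand is dominated by a constant multiple of $\e^{(\lambda_1-\mathrm{Re}\tilde\lambda(\bs f))u}(1+u)^{M}$ for some finite exponent $M$ absorbing all polynomial corrections; this is integrable on $[0,\infty)$ precisely because $\mathrm{Re}\tilde\lambda(\bs f)>\lambda_1$ in the large regime. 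All estimates being uniform in $x$ and $\bs f\in\mathrm{Ei}_1(\Lambda_L)^\ell$, the conclusion follows.
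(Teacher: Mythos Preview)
Your proposal is correct and follows essentially the same inductive route as the paper: the evolution equation of Lemma~\ref{lem:evo-2}, vanishing of the product term via $\mathrm{Re}\,\tilde\lambda(\bs f)>\lambda_1$, and convergence of the integral by applying the induction hypothesis blockwise together with a dominated-convergence bound of the shape $\e^{(\lambda_1-\mathrm{Re}\tilde\lambda(\bs f))u}(1+u)^{M}$. The only cosmetic differences are that the paper writes the integral directly as $\int_0^t\psi_s[\gamma\zeta_{[k]}[\psi_{t-s}^{(\cdot)}[\bs f_t]]]\,\mathrm{d}s$ without your change of variables, and it makes the polynomial normalisation explicit---invoking the hypothesis on $((1+s)^{-(p(f_i)-1)}\e^{-\mathcal N s}f_i)_{i\in A}$---whereas you absorb that growth into the exponent $M$ of the dominating function.
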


\begin{proof}
Let \(k\geq 2\) and assume that Theorem \ref{theorem: large regime} holds for \(\ell \le k-1\), where the case of \(k=1\) holds by \ref{H1b}. Furthermore, for ease of notation, for a function \(\bs f \in B(E)^k\), let
\begin{equation}
    \bs f_t = (f_{t,1},\dots,f_{t,k}), \quad f_{t,i}= \e^{-\mathcal{N}t}f_i, \quad 1\leq i \leq k, \quad t\in \mathbb{R}.
\end{equation} 
Firstly, by Lemma \ref{lem:evo-2}, for any \(\bs f \in B(E)^k\), \(x\in E\), and \(t\geq 0\), 
\begin{equation}
\label{eq: supercrit second evo}
   \psi^{(k)}_t[\bs f](x) =\psi_t\F[\prod_{i=1}^kf_i\R](x)+\int_0^t \psi_s\left[\gamma \zeta_{[k]}[\psi^{(\cdot)}_{t-s}[\bs f]]\right](x) \mathrm{d} s.
\end{equation}
 Since the product of any function uniformly bounded by 1 is also uniformly bounded by 1, by \eqref{eq: bound on nilpot oper} and \eqref{eq: control on semigroup}, the first term on the right-hand side satisfies
\begin{align}
\label{eq: equation lm conv 1}
    &\lim_{t\rightarrow \infty}\sup_{x \in E, \bs f \in \mathrm{Ei}_1(\Lambda_L)^k}\e^{-\mathrm{Re}\tilde \lambda(f) t
}\F|\psi_t\F[\prod_{i=1}^kf_{t,i}\R](x)\R| = 0,
\end{align}
where we have used, for any \(\bs f \in \mathrm{Ei}(\Lambda_L)^k\), \(\mathrm{Re}\tilde \lambda(\bs f) > \lambda_1\). For the integral term in \eqref{eq: supercrit second evo}, first note that by the inductive hypothesis and \eqref{eq: bound on nilpot oper}, we have  
\begin{equation*}
     \sup_{0\leq s \leq t <\infty}\sup_{\bs f \in \mathrm{Ei}(\Lambda_L)^k}\sup_{A \subseteq [k]}(1+s)^{-(\tilde p(f,A)-k)}\|\e^{-\tilde\lambda(f,A)(t-s)}\psi^{(\cdot)}_{t-s}[\bs f_t](A,\cdot)-L_{A}[\bs f_{s}]\|_{\infty}< \infty,
\end{equation*}
where we have applied the inductive hypothesis to the functions \(((1+s)^{-(p(f_i)-1)}f_{s,i})_{i\in A}\). Furthermore, again by the inductive hypothesis, the left-hand side tends to 0 as \(t-s\rightarrow \infty\). This and \eqref{eq: zeta branching operator bound} imply
\begin{equation*}
    \sup_{0\leq s \leq t <\infty}\sup_{\bs f \in \mathrm{Ei}(\Lambda_L)^k}(1+s)^{- (\tilde p(f,A)-k)}\|\e^{-\tilde\lambda(f)(t-s)}\zeta_{[k]}[\psi^{(\cdot)}_{t-s}[\bs f_t]](x)-\zeta_{[k]}[L_{\cdot}[\bs f_{-s}]]\|_{\infty}<\infty,
\end{equation*}
and that the left-hand side tends to 0 as \(t-s\rightarrow \infty\). This and \eqref{eq: control on semigroup} give 
\begin{align*}
 &\sup_{x \in E,\bs f \in \mathrm{Ei}_1(\Lambda_L)^k}\F| \int_{0}^{t} \psi_s\left[\gamma (\e^{-\tilde\lambda(f)t}\zeta_{[k]}[\psi^{(\cdot)}_{t-s}[\bs f_t]]-\e^{-\tilde \lambda(f)s}\zeta_{[k]}[L_{\cdot}[\bs f_{s}]])\right](x) \mathrm{d} s\R|  \\
 &\leq  \sup_{\bs f \in \mathrm{Ei}_1(\Lambda_L)^k}C_1\int_{0}^{t} \e^{(\lambda_1-\mathrm{Re}\tilde \lambda(f))s} (1+s)^{p_1-1}\|\e^{-\tilde\lambda(f)(t-s)}\zeta_{[k]}[\psi^{(\cdot)}_{t-s}[\bs f_t]](x)-\zeta_{[k]}[L_{\cdot}[\bs f_{s}]]\|_{\infty} \mathrm{d} s\\
 &\leq C_2\int_{0}^{t} \e^{(\lambda_1-\mathrm{Re}\tilde \lambda(f))s} (1+s)^{\tilde p(f) +p_1-(k+1)}g(t-s) \mathrm{d} s,
\end{align*}
where \(g:\mathbb{R}_{\geq 0}\rightarrow \mathbb{R}_{\geq 0}\) satisfies \(g(t)\rightarrow 0\) as \(t\rightarrow \infty\).
Therefore,
\begin{equation}
\label{eq: equation lm conv 2}
    \lim_{t\rightarrow \infty}\sup_{x \in E,\bs f \in \mathrm{Ei}_1(\Lambda_L)^k}\F| \int_{0}^{t} \psi_s\left[\gamma (\e^{-\tilde\lambda(f)t}\zeta_{[k]}[\psi^{(\cdot)}_{t-s}[\bs f_t]]-\e^{-\tilde \lambda(f)s}\zeta_{[k]}[L_{\cdot}[\bs f_{s}]])\right](x) \mathrm{d} s\R|=0.
\end{equation}
Using \eqref{eq: equation lm conv 1} and \eqref{eq: equation lm conv 2} in \eqref{eq: supercrit second evo} gives us Lemma \ref{lemma: large regime}.
\hfill$\square$

Returning to the proof of the theorem, let \(k\geq 1\) and note that for any \(\bs f \in \mathrm{Ei}(\Lambda_{L})^{k}\), we have
    \begin{equation*}
        \bs f = \e^{\mathcal{N}t}\bs f_t.
    \end{equation*}
    By definition of \(\mathcal{N}\) and \eqref{eq: bound on nilpot oper},
    \begin{align*}
        &\sup_{f \in \mathrm{Ei}_1(\Lambda_{L})}\lim_{t\rightarrow \infty}\|t^{-(p(f)-1)}\e^{\mathcal{N}t}f-(p(f)-1)!^{-1}\mathcal{N}^{p(f)-1}f\|_{\infty} = 0,\\
        &\sup_{f \in \mathrm{Ei}_1(\Lambda_{L})}\|\mathcal{N}^{p(f)-1}f\|_{\infty} \leq C\|f\|_{\infty}.
    \end{align*}
    The corollary follows from this and applying Lemma \ref{lemma: large regime} to the functions
    \begin{equation*}
        (\mathcal{N}^{p(f_1)-1}f,\dots,\mathcal{N}^{p(f_k)-1}f), \quad f \in \mathrm{Ei}_1(\Lambda_{L}).
    \end{equation*}
\end{proof}
\subsection{Proof of Theorem \ref{theorem: small regime}}
    We start by showing the case of \(k=2\). For this, we will actually prove the following stronger result.
    \begin{lemma}
    \label{lemma: second moment small lemma}
        Assume that \[\sup_{x\in E}\mathcal{E}_x[N^2]<\infty.\]For $t \ge 0$, let 
        \begin{align}
        &\Delta_{2,t}^*=\sup _{x \in E, \bs f \in \mathrm{Ei}_1(\Lambda_S)^2}\left|\e^{-(\lambda_1+\mathcal{N})t}\psi_t^{(2)}[\bs f](x)-L_{[2]}^*[\bs f](x)\right|, \label{eq: small regime 2 moment 1}
        \end{align}
        where 
        \begin{equation*}
        L_{[2]}^*[\bs f](x) = \Phi_{1}[f_1f_2](x) + \int_0^{\infty}\e^{-(\lambda_1+\mathcal{N})s}\Phi_{1}[\gamma \zeta_{[2]}[\psi^{(\cdot)}_{s}[\bs f]]](x)\mathrm{d}s.
        \end{equation*}
        Then 
        \begin{equation*}
        \sup_{t \geq 0} \Delta^*_{2,t}<\infty \text { and } \lim _{t \rightarrow \infty} \Delta^*_{2,t}=0.
        \end{equation*}
    \end{lemma}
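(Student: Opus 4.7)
The plan is to apply Lemma~\ref{lem:evo-2} with $k=2$, giving
\[
\psi_t^{(2)}[\bs f](x) = \psi_t[f_1 f_2](x) + \int_0^t \psi_s\F[\gamma\zeta_{[2]}[\psi^{(\cdot)}_{t-s}[\bs f]]\R](x)\,\d s,
\]
and analyse the two summands after applying the operator $\e^{-(\lambda_1+\mathcal{N})t}$. Note that $\mathcal P^*([2])=\{\{\{1\},\{2\}\}\}$, so $\zeta_{[2]}[\psi^{(\cdot)}_{u}[\bs f]](y)$ is a $\mathcal{E}_y$-expectation involving only the first-moment quantities $\psi_u[f_1]$ and $\psi_u[f_2]$, a fact that drives the envelope estimate below.

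For the first term, applying \eqref{eq: h1b assum} to $f_1 f_2\in B_1(E)$ gives $\e^{-(\lambda_1+\mathcal{N})t}\psi_t[f_1 f_2](x)\to\Phi_1[f_1 f_2](x)$ uniformly in $x\in E$ and $\bs f\in\mathrm{Ei}_1(\Lambda_S)^2$, with the prelimit bounded uniformly in $t$.

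For the integral term, I would substitute $u=t-s$ and factor $\e^{-(\lambda_1+\mathcal{N})t}=\e^{-(\lambda_1+\mathcal{N})u}\e^{-(\lambda_1+\mathcal{N})(t-u)}$. Setting $h_u:=\gamma\zeta_{[2]}[\psi^{(\cdot)}_u[\bs f]]$, which is uniformly bounded via $\sup_y\mathcal E_y[N^2]<\infty$ and \eqref{eq: control on semigroup}, \eqref{eq: h1b assum} gives $\e^{-(\lambda_1+\mathcal{N})(t-u)}\psi_{t-u}[h_u](x)\to\Phi_1[h_u](x)$ for each fixed $u$, so the integrand converges pointwise to $\e^{-(\lambda_1+\mathcal{N})u}\Phi_1[h_u](x)$. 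To interchange limit and integral I would build an integrable envelope: since $f_i\in\mathrm{Ei}_1(\Lambda_S)=\ker(\lambda_{\tau-1})$, the expansion in \eqref{eq: h1b assum} together with \eqref{eq: control on semigroup} yields $\|\psi_u[f_i]\|_\infty \le C\e^{\mathrm{Re}\lambda_\tau u}(1+u)^{p_\tau-1}$, hence
\[
\|h_u\|_\infty \le C'\e^{2\mathrm{Re}\lambda_\tau u}(1+u)^{2(p_\tau-1)}\sup_y\mathcal{E}_y[N^2].
\]
A further use of \eqref{eq: control on semigroup} on $\psi_{t-u}[h_u]$, combined with $(1+t-u)^{p_1-1}\le (1+t)^{p_1-1}$ to absorb the polynomial-in-$t$ action of $\e^{-(\lambda_1+\mathcal{N})t}$ via \eqref{eq: bound on nilpot oper}, produces an envelope of the form $C''\e^{(2\mathrm{Re}\lambda_\tau-\lambda_1)u}(1+u)^{p_1+2(p_\tau-1)-1}$, which is integrable on $[0,\infty)$ precisely because $2\mathrm{Re}\lambda_\tau<\lambda_1$ by the definition of the small regime. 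Dominated convergence then gives the claimed limit $\int_0^\infty \e^{-(\lambda_1+\mathcal{N})u}\Phi_1[h_u](x)\,\d u$.

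The delicate point is ensuring both the pointwise convergence from \eqref{eq: h1b assum} and the envelope hold \emph{uniformly} in $(x,\bs f)\in E\times\mathrm{Ei}_1(\Lambda_S)^2$, which is subtle because $\e^{-(\lambda_1+\mathcal{N})t}$ is not a scalar but an operator whose action is a polynomial in $t$ of degree up to $p_1-1$. Splitting it into its scalar $\e^{-\lambda_1 t}$ factor and its polynomial-in-$t$ factor and controlling the latter uniformly via \eqref{eq: bound on nilpot oper}, the uniformity in $f\in B_1(E)$ built into \ref{H1b} lifts to uniformity in $\mathrm{Ei}_1(\Lambda_S)^2$. Once the uniform dominated-convergence argument is set up, it simultaneously establishes $\Delta^*_{2,t}\to 0$ and $\sup_{t\ge 0}\Delta^*_{2,t}<\infty$, completing the proof.
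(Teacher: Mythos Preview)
Your overall strategy coincides with the paper's: apply Lemma~\ref{lem:evo-2}, treat the product term $\psi_t[f_1f_2]$ directly via \ref{H1b}, and for the integral factor $\e^{-(\lambda_1+\mathcal{N})t}=\e^{-(\lambda_1+\mathcal{N})u}\,\e^{-(\lambda_1+\mathcal{N})(t-u)}$, use \ref{H1b} on the inner factor and the small-regime inequality $2\mathrm{Re}\lambda_\tau<\lambda_1$ for integrability in $u$.

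There is, however, a gap in your envelope construction. Combining the crude operator-norm estimates \eqref{eq: control on semigroup} and \eqref{eq: bound on nilpot oper} as you describe yields
\[
\bigl\|\e^{-(\lambda_1+\mathcal{N})t}\psi_{t-u}[h_u]\bigr\|_\infty
\;\le\; C\,\e^{-\lambda_1 u}\,(1+t)^{q}\,(1+t-u)^{p_1-1}\,\|h_u\|_\infty
\]
for some $q\ge 0$, since $\e^{-\mathcal{N}t}=\sum_{j\ge 0}\tfrac{(-t)^j}{j!}\mathcal{N}^j$ has \emph{polynomial growth}, not decay, in $t$. The inequality $(1+t-u)^{p_1-1}\le(1+t)^{p_1-1}$ makes this worse, not better: the resulting bound is not $t$-independent, so dominated convergence does not follow. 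The paper sidesteps this by bounding the \emph{difference} rather than the prelimit. From the full \ref{H1b} one gets, uniformly in $g\in B(E)$,
\[
\bigl\|\e^{-(\lambda_1+\mathcal{N})r}\psi_r[g]-\Phi_1[g]\bigr\|_\infty \;\le\; \eta(r)\,\|g\|_\infty,\qquad \eta(r)\to 0,
\]
and hence also $\sup_{r\ge 0}\|\e^{-(\lambda_1+\mathcal{N})r}\psi_r[g]\|_\infty\le C\|g\|_\infty$. Applying this with $r=t-u$, $g=h_u$, and only \emph{then} acting with $\e^{-(\lambda_1+\mathcal{N})u}$ (whose polynomial growth is in $u$, not $t$) gives an integrand bounded by $C(1+u)^{p_1-1}\e^{-\lambda_1 u}\|h_u\|_\infty\le C'(1+u)^{q'}\e^{-\varepsilon u}$, which is the $t$-free envelope you want. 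In short, the uniform-in-$t$ control must come from \ref{H1b} itself, not from the crude bounds \eqref{eq: control on semigroup}--\eqref{eq: bound on nilpot oper}; once you make that substitution, your argument goes through and matches the paper's.
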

    By Lemma \ref{lem:evo-2}, for any \(\bs f \in B(E)^k\), \(x\in E\), and \(t\geq 0\), 
\begin{equation}
\label{eq: supercrit second evo 1}
   \psi^{(2)}_t[\bs f](x) =\psi_t\F[f_1f_2\R](x)+\int_0^t \psi_{t-s}\left[\gamma \zeta_{[2]}[\psi^{(\cdot)}_{s}[\bs f]]\right](x) \mathrm{d} s.
\end{equation}
For the first term on the right-hand side, by \ref{H1b}, we have that
\begin{equation}
\label{eq: need for small 1}
    \lim_{t\rightarrow \infty}\sup_{x \in E, \bs f \in \mathrm{Ei}_1(\Lambda_S)^2 }\F|\e^{-(\lambda_1+\mathcal{N})t}\psi_t\F[f_1f_2\R](x) - \Phi_{1}[f_1f_2](x)\R| = 0.
\end{equation}
Similarly, for the integrand in \eqref{eq: supercrit second evo 1}, we have that
\begin{equation}
    \lim_{t\rightarrow \infty}\sup_{s\geq 0}\sup_{x \in E, \bs f \in \mathrm{Ei}_1(\Lambda_S)^2 }\frac{\F|\e^{-(\lambda_1+\mathcal{N})t}\psi_{t}\left[\gamma \zeta_{[2]}[\psi^{(\cdot)}_{s}[\bs f]]\right](x)-\Phi_{1}[\gamma \zeta_{[2]}[\psi^{(\cdot)}_{s}[\bs f]](x)\R|}{1+\|\gamma \zeta_{[2]}[\psi^{(\cdot)}_{s}[\bs f]\|_{\infty}} =0. \label{eq: small integral bound 1}
\end{equation}
Next, by \eqref{eq: control on semigroup} and \eqref{eq: zeta branching operator bound}, there exists \(\varepsilon>0\), such that
\begin{equation*}
   \sup_{\bs f \in \mathrm{Ei}_1(\Lambda_S)^2 } \|\gamma \zeta_{[2]}[\psi^{(\cdot)}_{s}[\bs f]]\|_{\infty}\leq C\e^{(\lambda_1-\varepsilon)t},
\end{equation*}
where we have used \(2\mathrm{Re}\lambda_{\tau}<\lambda_1\). This, \eqref{eq: small integral bound 1}, and \eqref{eq: bound on nilpot oper} imply
\begin{align*}
   &\sup_{x \in E, \bs f \in \mathrm{Ei}_1(\Lambda_S)^2 }\F|\int_0^t \e^{-(\lambda_1+\mathcal{N})t}\psi_{t-s}\left[\gamma \zeta_{[2]}[\psi^{(\cdot)}_{s}[\bs f]]\right](x)-\e^{-(\lambda_1+\mathcal{N})s}\Phi_{1}[\gamma \zeta_{[2]}[\psi^{(\cdot)}_{s}[\bs f]](x) \mathrm{d} s\R|\\
   &\leq C_1\sup_{\bs f \in \mathrm{Ei}_1(\Lambda_S)^2 }\int_0^tg(t-s) (1+s)^{p_1-1}\e^{-\lambda_1s}(1+\|\gamma \zeta_{[2]}[\psi^{(\cdot)}_{s}[\bs f]\|_{\infty})\mathrm{d}s\\
   & \leq C_2\int_0^tg(t-s) (1+s)^{p_1-1}\e^{-\varepsilon s}\mathrm{d}s,
\end{align*}
where \(g:\mathbb{R}_{\geq 0}\rightarrow \mathbb{R}_{\geq 0}\) satisfies \(g(t)\rightarrow 0\) as \(t\rightarrow \infty\), and {to exchange the integral and operator \(\mathcal{N}\), we have used that \(\mathcal{N}\) is continuous since it is bounded}. Thus, the right-hand side tends to 0 as \(t\rightarrow \infty\). Using this and \eqref{eq: need for small 1} in \eqref{eq: supercrit second evo} gives Lemma \ref{lemma: second moment small lemma}.
\hfill$\square$

We return to the proof of Theorem \ref{theorem: small regime}. To obtain the case of \(k=2\) from Lemma \ref{lemma: second moment small lemma}, we use that, for the eigenspace with eigenvalue \(\lambda_1\), the dominating term of \(e^{\mathcal{N}t}\) is \((p_1-1)!^{-1}t^{p_1-1}\mathcal{N}^{p_1-1}\). Furthermore, by \ref{H1b}, for \(f \in B(E)\),
\begin{align*}
   & \mathcal{N}^{p_1-1}\Phi_{1}[f] = \Phi_{1,1}[\mathcal{N}^{p_1-1}f].
\end{align*}
Thus, \eqref{eq: small regime 2 moment 2} for \(k=2\) follows from this and \eqref{eq: small regime 2 moment 1}. The inductive step that completes the proof follows an identical structure to the proof of Theorem \ref{theorem: large regime}. We thus omit the details.
\hfill $\square$

\subsection{Proof of Theorem \ref{theorem: critical regime}}
     We start by showing the case of \(k=2\). Similarly to the proof of Theorem \ref{theorem: small regime}, we prove the following stronger result.
     \begin{lemma}
     \label{lemma: second moment critical}
Assume that \[\sup_{x\in E}\mathcal{E}_x[N^2]<\infty.\] For \(0 \leq \alpha \leq p_1-1\), let 
\begin{equation}
   \Delta_{2,t}^{\alpha}=\sup _{x \in E, \bs f \in \mathrm{Ei}_1(\Lambda_C)^2}\left|\e^{-\frac{\ell\lambda_1}{2}}(1+t)^{-\F({\tilde p(\bs f)}+p_1-\alpha-2\R)}\mathcal{N}^{\alpha}\psi_t^{(2)}[\bs f](x)-L^{\alpha}_{[2]}[\bs f](x)\right|, \label{eq: critical regime 2 moment 2}
\end{equation}
where $\lambda_1$ and $p_1$ were given in \ref{H1b}, and,
\begin{equation*}
   L_{[2]}^{\alpha}[\bs f](x)  = \frac{\Phi_{1,1}[\mathcal{N}^{p_1-1}(\gamma \zeta_{[2]}[\Phi_{\lambda(\cdot),1}[\bs f^*]])](x)(\tilde p(\bs f)-2)!}{(p(f_1)-1)!(p(f_2)-1)!(p_1+\tilde p(\bs f)-\alpha-2)!}, \quad \bs f \text{ conjugate \(2\)-tuple},
\end{equation*}
Otherwise \(L_A^{\alpha}[\bs f]=0\).
Then, for \(1\leq \ell \leq k\), 
\begin{equation*}
    \sup_{t \geq 0} \Delta_{2,t}<\infty \text { and } \lim _{t \rightarrow \infty} \Delta_{2,t}=0.
\end{equation*}
\end{lemma}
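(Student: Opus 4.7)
The plan is to follow the same template as the proof of Lemma \ref{lemma: second moment small lemma}, but now the integral term in the second-moment evolution no longer converges absolutely: it contributes the full polynomial factor \((1+t)^{\tilde p(\bs f)+p_1-\alpha-2}\) and we must extract it via an explicit Beta-type integral. Applying Lemma \ref{lem:evo-2} and the bounded operator \(\mathcal{N}^{\alpha}\) (which commutes with the integral) gives
\[
\mathcal{N}^{\alpha}\psi^{(2)}_t[\bs f](x) = \mathcal{N}^{\alpha}\psi_t[f_1 f_2](x) + \int_0^t \mathcal{N}^{\alpha}\psi_{t-s}\F[\gamma\zeta_{[2]}[\psi^{(\cdot)}_s[\bs f]]\R](x)\,\mathrm{d}s.
\]
Since \(f_1f_2\) is uniformly bounded, combining \eqref{eq: control on semigroup} with \eqref{eq: bound on nilpot oper} shows \(\|\mathcal{N}^{\alpha}\psi_t[f_1f_2]\|_{\infty} \leq C\e^{\lambda_1 t}(1+t)^{p_1-1-\alpha}\); because \(\tilde p(\bs f)\geq 2\), this term is \(o\bigl(\e^{\lambda_1 t}(1+t)^{\tilde p(\bs f)+p_1-\alpha-2}\bigr)\) and is therefore asymptotically negligible.

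For the integral, I would extract the leading \(s\)-behaviour of the integrand by applying \eqref{eq: h1b assum 2} to each \(\psi_s[f_i]\): its dominant part in \(s\) is \(\frac{s^{p(f_i)-1}}{(p(f_i)-1)!}\e^{\lambda(f_i)s}\Phi_{\nu(f_i),1}[\mathcal{N}^{p(f_i)-1}f_i]\), using the Jordan-chain properties of \(\mathcal{N}\) from \ref{H1b}. Because \(\mathcal{P}^*([2]) = \{\{\{1\},\{2\}\}\}\), the operator \(\gamma\zeta_{[2]}[\cdot]\) is simply bilinear in its two singleton entries, so to leading order the integrand is
\[
\frac{s^{\tilde p(\bs f)-2}}{(p(f_1)-1)!(p(f_2)-1)!}\e^{(\lambda(f_1)+\lambda(f_2))s}\,\gamma\zeta_{[2]}\F[\Phi_{\lambda(\cdot),1}[\bs f^*]\R].
\]
Applying \eqref{eq: h1b assum 2} once more gives \(\mathcal{N}^{\alpha}\psi_{t-s}[h]\sim \frac{(t-s)^{p_1-1-\alpha}}{(p_1-1-\alpha)!}\e^{\lambda_1(t-s)}\Phi_{1,1}[\mathcal{N}^{p_1-1}h]\) for bounded \(h\). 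In the conjugate case \(\lambda(f_2)=\overline{\lambda(f_1)}\), the exponential factors collapse to \(\e^{\lambda_1 t}\) and the remaining \(s\)-integral reduces to the Beta integral
\[
\int_0^t s^{\tilde p(\bs f)-2}(t-s)^{p_1-\alpha-1}\,\mathrm{d}s = t^{\tilde p(\bs f)+p_1-\alpha-2}\,\frac{(\tilde p(\bs f)-2)!(p_1-\alpha-1)!}{(\tilde p(\bs f)+p_1-\alpha-2)!},
\]
which, combined with the scalar prefactors from the two asymptotic expansions above, yields precisely \(L^{\alpha}_{[2]}[\bs f](x)\) after normalisation. In the non-conjugate case, \(\lambda(f_1)+\lambda(f_2) = \lambda_1 + i\beta\) with \(\beta \neq 0\), so one integration by parts against the oscillatory factor \(\e^{i\beta s}\) reduces the polynomial degree of the resulting integral by one, and after the prescribed scaling it vanishes in the limit, matching \(L^{\alpha}_{[2]}[\bs f]=0\).

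To complete the proof I would handle the error terms by splitting each of \(\psi_s[f_i]\) and \(\mathcal{N}^{\alpha}\psi_{t-s}[\gamma\zeta_{[2]}[\cdots]]\) into ``leading term plus \(o(\cdot)\) remainder'' and bounding the resulting cross-terms: the \(o(\cdot)\) remainder produces an integral that is \(o(t^{\tilde p(\bs f)+p_1-\alpha-2})\) by dominated convergence after the change of variables \(u = s/t\), while the other error terms carry strictly smaller powers of \(s\) or \((t-s)\) thanks to \eqref{eq: bound on nilpot oper}, \eqref{eq: control on semigroup} and \eqref{eq: zeta branching operator bound}. Uniformity in \(x\) and in \(\bs f\in \mathrm{Ei}_1(\Lambda_C)^2\) is inherited from the uniformity of the estimates in \ref{H1b}. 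I expect the principal obstacle to be the uniform control of the \(o(\cdot)\) remainders under Beta-integral-type scaling: one must separate the regions of small \(s\), small \(t-s\), and both large, each handled with a different application of \eqref{eq: h1b assum 2}, and verify that the decay rate \(g(t-s)\) produced by \ref{H1b} is absorbed without losing any of the polynomial factor needed for the limit constant.
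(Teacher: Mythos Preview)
Your proposal is correct and follows essentially the same approach as the paper: apply the evolution equation, show the product term is negligible via the extra polynomial scaling from $\tilde p(\bs f)\geq 2$, extract the leading behaviour of both the inner $\psi_s[f_i]$ and the outer $\mathcal{N}^\alpha\psi_{t-s}$ via \ref{H1b}, reduce to the Beta integral in the conjugate case, and use the oscillatory factor $\e^{i\beta s}$ to kill the limit in the non-conjugate case. The paper packages the error control with auxiliary functions $g_1,g_2\to 0$ rather than a change of variables $u=s/t$, but the substance is identical.
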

\begin{proof}
    Fix \(0\leq \alpha \leq p_1-1\). By Lemma \ref{lem:evo-2}, for any \(\bs f \in B(E)^k\), \(x\in E\), and \(t\geq 0\), 
\begin{equation}
\label{eq: supercrit second evo 2}
   \psi^{(2)}_t[\bs f](x) =\psi_t\F[f_1f_2\R](x)+\int_0^t \psi_{t-s}\left[\gamma \zeta_{[2]}[\psi^{(\cdot)}_{s}[\bs f]]\right](x) \mathrm{d} s.
\end{equation}
For ease of notation, let \(\beta(\bs f) = {\tilde  p(\bs f)}+p_1-\alpha-2\). For the first term on the right-hand side, an identical argument to \eqref{eq: need for small 1} implies
\begin{equation*}
    \lim_{t\rightarrow \infty}\sup_{x \in E, \bs f \in \mathrm{Ei}_1(\Lambda_C)^2 }\F|(1+t)^{-\beta(\bs f)}\e^{-\lambda_1t}\mathcal{N}^{\alpha}\psi_t\F[f_1f_2\R](x)\R| = 0.
\end{equation*}
We now show convergence of the integral term in \eqref{eq: supercrit second evo 2}. Let 
\begin{equation*}
   h_{\bs f}(x)=\frac{\gamma \zeta_{[2]}[\Phi_{\lambda(\cdot),1}[\bs f^*]](x)}{(p(f_1)-1)!(p(f_2)-1)!},
\end{equation*}
where \(\bs f^*\) is as in \eqref{eq: f star}. By boundedness of \(\mathcal{N}\) and \eqref{eq: zeta branching operator bound}, there exists constant \(C\), such that
\begin{equation}
\label{eq: bound for the hf}
  \sup_{\bs f \in \mathrm{Ei}_1(\Lambda_C)^2}\|h_{\bs f}\|_{\infty}<C.  
\end{equation} This, \eqref{eq: zeta branching operator bound} and \ref{H1b} imply
\begin{equation}
\label{eq: need it crit m bound 1 }
    \lim_{t\rightarrow \infty}\sup_{x \in E, \bs f \in \mathrm{Ei}_1(\Lambda_C)^2 }\F|t^{-(\tilde p(\bs f)-2)}\e^{-\tilde \lambda(\bs f)t}\gamma \zeta_{[2]}[\psi^{(\cdot)}_{t}[\bs f]](x)-h_{\bs f}(x)\R|=0.
\end{equation}
Next, by \ref{H1b}, we have
\begin{equation}
    \lim_{t\rightarrow \infty}\sup_{s\geq 0}\sup_{x \in E, \bs f \in \mathrm{Ei}_1(\Lambda_C)^2 }\frac{\F|\e^{-\lambda_1t}\mathcal{N}^{\alpha}\psi_{t}\left[\gamma \zeta_{[2]}[\psi^{(\cdot)}_{s}[\bs f]]\right](x)-\e^{\mathcal{N}t}\mathcal{N}^{\alpha}\Phi_{1}[\gamma \zeta_{[2]}[\psi^{(\cdot)}_{s}[\bs f]](x)\R|}{1+\|\gamma \zeta_{[2]}[\psi^{(\cdot)}_{s}[\bs f]\|_{\infty}} =0. \label{eq: need it crit m bound 2}
\end{equation}
Thus, by feeding the limit \eqref{eq: need it crit m bound 1 } into \eqref{eq: need it crit m bound 2}, and using \eqref{eq: bound for the hf}, we obtain, for \(x \in E\), and \(\bs f \in \mathrm{Ei}_1(\Lambda_C)^2\),
\begin{align*}
    &(1+t)^{-\beta(\bs f)}\F|\int_0^t \e^{-\lambda_1t}\mathcal{N}^{\alpha}\psi_{t-s}\left[\gamma \zeta_{[2]}[\psi^{(\cdot)}_{s}[\bs f]]\right](x) -\e^{(\tilde \lambda(\bs f)-\lambda_1)s}{s^ {\tilde p(\bs f)-2}\e^{\mathcal{N}(t-s)}}\mathcal{N}^{\alpha}\Phi_{1}[h_{\bs f}](x)\mathrm{d} s\R|\\
    &\leq (1+t)^{-\beta(\bs f)}\F|\int_0^t \e^{-\lambda_1t}\psi_{t-s}\left[\gamma \zeta_{[2]}[\psi^{(\cdot)}_{s}[\bs f]]\right](x) -\e^{-\lambda_1s}\e^{\mathcal{N}(t-s)}\mathcal{N}^{\alpha}\Phi_{1}[\gamma \zeta_{[2]}[\psi^{(\cdot)}_{t-s}[\bs f]](x)\mathrm{d} s\R|\\
    &+(1+t)^{-\beta(\bs f)}\F|\int_0^t \e^{\mathcal{N}(t-s)}\mathcal{N}^{\alpha}\Phi_{1}[\gamma \zeta_{[2]}[\psi^{(\cdot)}_{t-s}[\bs f]](x)-\e^{(\tilde \lambda(\bs f)-\lambda_1)s}{s^ {\tilde p(\bs f)-2}\e^{\mathcal{N}(t-s)}}\mathcal{N}^{\alpha}\Phi_{1}[h_{\bs f}](x)\mathrm{d} s\R|\\
    &\leq (1+t)^{-\beta(\bs f)}\F|\int_0^t (1+t-s)^{p_1-1-\alpha}(1+s)^{\tilde p(f)-2}g_1(t-s)\mathrm{d} s\R|\\
    & + (1+t)^{-\beta (\bs f)}\int_0^t (1+t-s)^{p_1-1-\alpha}(1+s)^{\tilde p(f)-2}g_2(s)\mathrm{d}s 
\end{align*}
where \(g_1,g_2:\mathbb{R}_{\geq 0}\rightarrow \mathbb{R}_{\geq 0}\), \(C\) are independent of the choice of \(x\) and \(\bs f\), and \(g_1(t),g_2(t)\rightarrow 0\) as \(t\rightarrow \infty\). Therefore, the right-hand side tends to 0 as \(t\rightarrow \infty\). It is left to show convergence of
\begin{equation*}
    (1+t)^{-\beta(\bs f)}\int_0^t \e^{(\tilde \lambda(\bs f)-\lambda_1)s}{s^ {\tilde p(\bs f)-2}\e^{\mathcal{N}(t-s)}}\mathcal{N}^{\alpha}\Phi_{1}[h_{\bs f}](x)\mathrm{d} s \rightarrow \begin{cases}
   0, \quad \tilde \lambda(\bs f)-\lambda_1 \in \mathbb{C},\\\frac{\Phi_{1,1}[\mathcal{N}^{p_1-1}h_{\bs f}](\tilde p(\bs f)-2)!}{(p_1+\tilde p(\bs f)-2)!}, \quad  \tilde\lambda(\bs f)-\lambda_1=0.  
   \end{cases}
\end{equation*} 
as \(t\rightarrow \infty\). First note that, by \ref{H1b} and \eqref{eq: bound for the hf}, for any \(0<\varepsilon<1\),
\begin{equation*}
    \lim_{t\rightarrow \infty}\sup_{\bs f \in \mathrm{Ei}_1(\Lambda_C)^2}t^{\varepsilon}\F\|t^{-(p_1-\alpha-1)}e^{\mathcal{N}t}\mathcal{N}^{\alpha}\Phi_{1}[h_{\bs f}]- \frac{\Phi_{1,1}[\mathcal{N}^{p_1-1}h_{\bs f}]}{(p_1-\alpha-1)!}\R\|_{\infty} = 0.
\end{equation*}
Thus, uniformly, for any \(\bs f \in \mathrm{Ei}_1(\Lambda_C)^2\), \(x \in E\),
\begin{equation*}
    \lim_{t\rightarrow \infty}(1+t)^{-\beta(\bs f)}\F|\int_0^t\e^{(\tilde \lambda(\bs f)-\lambda_1)s}{s^ {\tilde p(\bs f)-2}\Bigg(\e^{\mathcal{N}(t-s)}}\sum_{j=1}^{p_1}\Phi_{1,j}[h_{\bs f}](x)-\frac{(t-s)^{p_1-\alpha-1}\Phi_{1,1}[\mathcal{N}^{p_1-1}h_{\bs f}]}{(p_1-\alpha-1)!}\Bigg)\mathrm{d} s\R|=0.
\end{equation*}
as \(t\rightarrow \infty\). Finally, by standard integration techniques, we have that
\begin{align*}
   &\lim_{t\rightarrow \infty}(1+t)^{-\beta(\bs f)} \frac{\Phi_{1,1}[\mathcal{N}^{p_1-1}h_{\bs f}]}{(p_1-\alpha-1)!}\int_0^t \e^{(\tilde \lambda(\bs f)-\lambda_1)s}(t-s)^{p_1-\alpha-1}{s^ {\tilde p(\bs f)-2}}\mathrm{ds}\\
   &=\begin{cases}
   0, \quad \tilde \lambda(\bs f)-\lambda_1 \in \mathbb{C},\\\frac{\Phi_{1,1}[\mathcal{N}^{p_1-1}h_{\bs f}](\tilde p(\bs f)-2)!}{(p_1+\tilde p(\bs f)-\alpha-2)!}, \quad  \lambda(\bs f)-\lambda_1=0.  
   \end{cases}
\end{align*}
\end{proof}
Returning to the proof of Theorem \ref{theorem: critical regime}, the case of \(k=2\) is handled by Lemma \ref{lemma: second moment critical} with \(\alpha=0\). The proof of \(k\geq 3\) follows an identical structure to the proof of Theorem \ref{theorem: large regime}. We omit the details.

\hfill$\square$

\section{Appendix}
In this section, we state an evolution equation relating moments of the form \(\prod_{i=1}^kX_t[f_i]\) for \(f_1,\dots,f_k \in B(E)\) to lower order product moments. 
\begin{lemma}\label{lem:evo-2}
  Fix \(k\geq 1\). Assume that 
  \begin{equation}
  \label{eq: off bound apendix}
    \sup_{x\in E}\mathcal{E}_x[N^k]<\infty. 
  \end{equation}
  
  Then, for any \(f_1,\dots,f_k \in B(E)\), \(x\in E\), and \(t\geq 0\), we have that
\begin{equation}\label{eq:evo-23}
  \psi^{(k)}_t[\bs f](x) =\psi_t\F[f_1 \cdots f_k\R](x)+\int_0^t \psi_s\left[\gamma \zeta_{[k]}[\psi^{(\cdot)}_{t-s}[\bs f]]\right](x) \mathrm{d} s, \quad t \geq 0,
\end{equation}
where we have used the notation introduced in Section \ref{sec:main}.
\end{lemma}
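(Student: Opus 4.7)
The plan is to derive \eqref{eq:evo-23} via a direct ``many-to-few'' decomposition of the $k$-fold product. Expanding
\[
\prod_{i=1}^{k} X_t[f_i] = \sum_{(u_1,\dots,u_k)}\prod_{i=1}^{k} f_i(u_i),
\]
where the outer sum runs over ordered $k$-tuples of particles alive at time $t$, I would split the $\mathbb{E}_{\delta_x}$-expectation into a \emph{diagonal} part, in which $u_1=\dots=u_k$, and an \emph{off-diagonal} part, in which at least two of the $u_i$ are distinct. The diagonal part reduces to $X_t[f_1\cdots f_k]$, whose expectation is $\psi_t[f_1\cdots f_k](x)$ by definition, supplying the first term on the right-hand side of \eqref{eq:evo-23}.

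For the off-diagonal part, every such $k$-tuple possesses a unique most-recent common ancestor (MRCA) branching event, at a time $s \in (0,t)$ and position $v \in E$: a single ancestor at $v$ branches into $N$ offspring at $x_1,\dots,x_N$, and the partition $\sigma \in \mathcal{P}^*([k])$ of $[k]$ induced by which offspring each $u_i$ descends from has at least two blocks, with a distinct offspring label $i_q \in [N]$ attached to each block $\sigma_q$ (an injection $\bs i \in B_{|\sigma|,N}$). I would integrate over the MRCA event using Campbell's formula for the point process of branch events along $X$:
\[
  \mathbb{E}_{\delta_x}\!\Bigl[\sum_{(\tau_b, v_b)} F(\tau_b, v_b)\Bigr] = \int_0^t \psi_s\bigl[\gamma\, F(s,\cdot)\bigr](x)\,\mathrm{d}s,
\]
valid for nonnegative measurable $F$; this formula arises because $\psi_s$ is the mean particle measure at time $s$ and $\gamma$ is the branching rate.

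Conditional on the MRCA event at $(s,v)$ and the offspring positions, the branching property implies that the subtrees rooted at the offspring are independent BMPs, so the expected contribution of a fixed $(\sigma, \bs i)$-configuration factorises as $\prod_{q=1}^{|\sigma|}\psi_{t-s}^{(\sigma_q)}[\bs f](x_{i_q})$; note that within a block $\sigma_q$ the sub-tuple $(u_r)_{r \in \sigma_q}$ may exhibit further coincidences, which are all captured by the product moment $\psi_{t-s}^{(\sigma_q)}$. Summing over $\sigma \in \mathcal{P}^*([k])$ and $\bs i \in B_{|\sigma|,N}$ and averaging against the offspring law $\mathcal{P}_v$ produces precisely $\zeta_{[k]}[\psi_{t-s}^{(\cdot)}[\bs f]](v)$ by \eqref{eq: zeta branching operator}, and inserting this into Campbell's formula gives the second term in \eqref{eq:evo-23}.

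The main obstacle will be the rigorous justification of Fubini and of Campbell's identity for the branching-event point process, both of which require finiteness of $\mathbb{E}_{\delta_x}[N_t^k]$ uniformly in $x$ and locally uniformly in $t$. This in turn follows from the hypothesis \eqref{eq: off bound apendix}, the boundedness of $\gamma$, and a Gronwall-type argument applied to the evolution equation for $\mathbb{E}_{\delta_x}[N_t^k]$ obtained by conditioning on the first branch event. The base case $k = 1$ is immediate since $\mathcal{P}^*([1])=\emptyset$ gives $\zeta_{[1]}\equiv 0$, and \eqref{eq:evo-23} reduces to the definition of $\psi_t^{(1)}[f_1]$.
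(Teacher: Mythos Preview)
Your argument is sound and constitutes a genuinely different route from the paper's. The paper proceeds analytically: it invokes the non-linear evolution equation for the Laplace functional $u_t[f](x)=\mathbb{E}_{\delta_x}[1-\e^{-X_t[f]}]$ from \cite[Theorem 8.2]{bNTEbook}, applies it with $f=\sum_{i=1}^k\theta_i f_i$, differentiates in $\theta_1,\dots,\theta_k$ at $0$ (which mechanically produces the sum over $\mathcal P^*([k])$ and the injections $B_{|\sigma|,N}$ from the product rule applied to $\prod_i(1-u(x_i))$), and then lifts from $f_i\in B^+(E)$ to real- and complex-valued $f_i$ by linearity. Your approach is probabilistic: you decompose the $k$-tuple sum pathwise according to the MRCA branch event and read off $\zeta_{[k]}$ directly from the offspring combinatorics. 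The generating-function route buys you the identity essentially for free once the non-linear equation is in hand, and the extension to signed and complex $f_i$ is automatic from the polynomial nature of both sides; your route gives a transparent probabilistic interpretation of each term and does not need the Laplace machinery, but the price is that the ``Campbell'' step you quote is really a Palm/many-to-few identity (you need the conditional law of the post-branch subtrees, not just the intensity of branch points), and making that rigorous for a general non-local mechanism with only \eqref{eq: off bound apendix} requires some care---essentially the Gronwall bound you mention, plus a careful statement of the branching/Markov property at a randomly selected branch event. Both approaches ultimately rely on the same moment hypothesis to justify the interchange of sums, expectations and integrals.
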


This result is an extension of \cite[Proposition 9.1]{bNTEbook}. Since the proof is extremely similar, we only sketch the steps and leave the details to the reader. 

\begin{proof}[Sketch proof]
The proof follows by induction. We start by assuming that $f_1, \dots, f_k \in B^+(E)$ with the additional assumption that the functions are real-valued. The result for $k = 1$ holds trivially since $\zeta_{[1]}[\psi^{(\cdot)}_{t}[\bs f]] = 0$ for all $t \ge 0$. Assuming the result holds for $k \ge 2$, 
for $t \geq 0, x \in E, f \in B^+(E)$ with $f$ real-valued, define
\begin{equation*}
        u_t[f](x)=\mathbb{E}_{\delta_x}\left[1-\mathrm{e}^{-X_t[f]\mathrm{d} s}\right].
    \end{equation*}
    Then, by \cite[Theorem 8.2]{bNTEbook}, for \(\theta_1,\dots,\theta_k>0\), we have that
    \begin{equation}
    \label{eq: thm 8.2 equal}
        u_t\F[\sum_{i=1}^k \theta_i f_i\R](x) =\psi_t\F[1-\e^{-\sum_{i=1}^k \theta_i f_i}\R](x)-\int_{0}^t\psi_s\F[A\F[u_{t-s}\F[\sum_{i=1}^k \theta_i f_i\R]\R]\R](x)\mathrm{d}s,
    \end{equation}
      where
    \begin{equation*}
    A[f](x)=\gamma(x) \mathcal{E}_x\left[\prod_{i=1}^N\left(1-f\left(x_i\right)\right)-1+\sum_{i=1}^N f\left(x_i\right)\right], \quad x \in E.
    \end{equation*}
Differentiating with respect to $\theta_1, \dots, \theta_k$ and setting $\theta_1 = \dots = \theta_k = 0$, as in \cite[Proposition 9.1]{bNTEbook}, yields the result for $f_1, \dots, f_k$ non-negative and real-valued. To remove the restriction that $f_1, \dots, f_k$ are non-negative, we can write each $f_i$ as the difference of its positive and negative parts. Some algebra, combinatorics and the result for non-negative functions then yields the result for general bounded, real-valued functions. Finally, to extend the result to complex-valued functions, we write each complex function as the sum of its real and imaginary parts and the proof then follows as in the real case. 
\end{proof}

\section*{Acknowledgements}
This work was supported by the EPSRC grant MaThRad EP/W026899/1. Part of this work was completed while the first author was in receipt of a scholarship from the EPSRC Centre for Doctoral Training in Statistical Applied Mathematics at Bath (SAMBa), under the project EP/S022945/1.

\bibliographystyle{plain}
\bibliography{bibbp.bib}

\begin{thebibliography}{10}

\bibitem{AH1}
S.~Asmussen and H.~Hering.
\newblock Strong limit theorems for general supercritical branching processes
  with applications to branching diffusions.
\newblock {\em Z. Wahrscheinlichkeitstheorie und Verw. Gebiete},
  36(3):195--212, 1976.

\bibitem{NTEmoments}
Eric Dumonteil, Emma Horton, Andreas~E Kyprianou, and Andrea Zola.
\newblock Limit theorems for the neutron transport equation.
\newblock {\em arXiv preprint arXiv:2407.04820}, 2024.

\bibitem{durhamCMJ}
S.D. Durham.
\newblock Limit theorems for a general critical branching process.
\newblock {\em Journal of Applied Probability}, 8(1):1--16, 1971.

\bibitem{EHK}
J.~Engl\"{a}nder, S.C. Harris, and A.~E. Kyprianou.
\newblock Strong law of large numbers for branching diffusions.
\newblock {\em Ann. Inst. Henri Poincar\'{e} Probab. Stat.}, 46(1):279--298,
  2010.

\bibitem{meatman}
J.~Fleischman.
\newblock Limiting distributions for branching random fields.
\newblock {\em Trans. Amer. Math. Soc.}, 239:353--389, 1978.

\bibitem{genealogies_felix}
F{\'e}lix Foutel-Rodier and Emmanuel Schertzer.
\newblock Convergence of genealogies through spinal decomposition with an
  application to population genetics.
\newblock {\em Probability Theory and Related Fields}, 187(3):697--751, 2023.

\bibitem{bmoments}
I.~Gonzalez, E.~Horton, and A.~E. Kyprianou.
\newblock Asymptotic moments of spatial branching processes.
\newblock {\em Probability Theory and Related Fields}, 184(3-4):805--858, 2022.

\bibitem{M2few}
Simon~C Harris, Emma Horton, Andreas~E Kyprianou, and Ellen Powell.
\newblock Many-to-few for non-local branching markov process.
\newblock {\em Electronic Journal of Probability}, 29:1--26, 2024.

\bibitem{bNTEbook}
E.~Horton and A.~E. Kyprianou.
\newblock {\em Stochastic neutron transport and non-local branching Markov
  processes}.
\newblock Probability and its Applications. Birkh\"auser, 2023.

\bibitem{pedro}
Emma Horton, Andreas~E Kyprianou, Pedro Mart{\'\i}n-Ch{\'a}vez, Ellen Powell,
  and Victor Rivero.
\newblock Stability of (sub) critical non-local spatial branching processes
  with and without immigration.
\newblock {\em arXiv preprint arXiv:2407.05472}, 2024.

\bibitem{iscoe}
I.~Iscoe.
\newblock On the supports of measure-valued critical branching {B}rownian
  motion.
\newblock {\em Ann. Probab.}, 16(1):200--221, 1988.

\bibitem{Klenke}
A.~Klenke.
\newblock Multiple scale analysis of clusters in spatial branching models.
\newblock {\em Ann. Probab.}, 25(4):1670--1711, 1997.

\end{thebibliography}

\end{document}